\documentclass{amsart}

\usepackage{amsthm, amsmath, amsfonts, hyperref, verbatim}
\theoremstyle{plain}
\newtheorem{thm}{Theorem}
\newtheorem{lem}[thm]{Lemma}
\newtheorem{cor}[thm]{Corollary}

\theoremstyle{definition}
\newtheorem{exl}[thm]{Example}

\newcommand{\Z}{\mathbb{Z}}

\DeclareMathOperator{\Rem}{Rem}
\DeclareMathOperator{\Fix}{Fix}

\newcommand{\Growth}{\mathop{\mathrm{Growth}}}

\begin{document}

\bibliographystyle{hplain}

\title{Dynamics of random selfmaps of surfaces with boundary}
\author{Seung Won Kim}
\address{Dept. of Mathematics, Kyungsung University, Busan 608-736, Republic of Korea}
\email{kimsw@ks.ac.kr} 
\author{P. Christopher Staecker}
\address{Dept. of Mathematics and Computer Science, Fairfield University, Fairfield CT, 06824, USA}
\email{cstaecker@fairfield.edu}
\keywords{periodic points, generic properities, surface, remnant, asymptotic density, Nielsen theory, entropy}
\subjclass[2010]{37E15, 55M20, 37C20}

\begin{abstract}
We use Wagner's algorithm to estimate the number of periodic points of certain selfmaps on compact surfaces with boundary. When counting according to homotopy classes, we can use the asymptotic density to measure the size of sets of selfmaps. 
In this sense, we show that ``almost all'' such selfmaps have periodic points of every period, and that in fact the number of periodic points of period $n$ grows exponentially in $n$. We further discuss this exponential growth rate and the topological and fundamental-group entropies of these maps.

Since our approach is via the Nielsen number, which is homotopy and homotopy-type invariant, our results hold for selfmaps of any space which has the homotopy type of a compact surface with boundary.
\end{abstract}
\maketitle

\section{Preliminaries}
Let $X$ be a space with the homotopy type of a compact connected surface with boundary, and let $f:X \to X$ be a selfmap. We will study the fixed and periodic point theory of $f$ using the Nielsen number $N(f)$. This $N(f)$ is a natural number, possibly zero, which is a homotopy invariant lower bound for the size of the fixed point set $\Fix(f)$. 

Let $G = \pi_1(X)$, which will be a finitely generated free group. Throughout the paper we will let $m$ be the rank of $G$, fix a particular free basis $\{ a_1, \dots, a_m \}$, and consider elements of $G$ as reduced words in the letters $a_i^{\pm 1}$. The Nielsen number of $f$ is a homotopy invariant which can be computed directly from the induced homomorphism $f_\#:\pi_1(X) \to \pi_1(X)$. For an endomorphism $\phi:G \to G$, we let $N(\phi)$ be the Nielsen number of some map $f:X \to X$ with $f_\# = \phi$. 

Our main tool for computing the Nielsen number will be Wagner's algorithm of \cite{wagn99}. There exists a well-developed Nielsen theory of periodic points for selfmaps on manifolds. Wagner's algorithm has recently been adapted by Hart, Heath, and Keppelmann in \cite{hhk1} and \cite{hhk2} for  Nielsen periodic point theory on surfaces with boundary. We will use some of these sophisticated methods in our approach, but generally the ordinary Nielsen number $N(f^n)$ will suffice.

Wagner's algorithm applies to maps which satisfy Wagner's remnant condition. 
For an endomorphism $\phi:G \to G$, we say that $\phi$ \emph{has remnant} when, for each $i$, there
is a nontrivial subword of $\phi(a_i)$ which does not cancel in any product
of the form
\[ \phi(a_j)^{\pm 1} \phi(a_i) \phi(a_k)^{\pm 1} \]
except for when $j$ or $k$ equals $i$ and the exponent is $-1$.
Each such maximal
noncanceling subword is called the remnant of $a_i$, denoted $\Rem_\phi
a_i$.

The statement that $\phi$ has remnant is similar to
the statement that the set $\{\phi(a_i)\}$ is Nielsen reduced (see e.g.\ \cite{ls77}).  

We review the basics of Wagner's algorithm: Given an endomorphism $\phi:G \to G$, we first build the set of \emph{Wagner tails}, which are pairs of elements of $G$. This set is constructed as follows: for any $\phi$, the pair $(1,1)$ is a Wagner tail. Also, for each occurrence of the letter $a_i^\epsilon$ (for $\epsilon \in \{+1, -1\}$) in $\phi(a_i)$, write a reduced product $\phi(a_i) = va_i^{\epsilon}\bar v$. Then $(w,\bar w)$ is a Wagner tail, where 
\[ w = \begin{cases} 
v &\text{ if } \epsilon = 1 \\
va_i^{-1} &\text{ if } \epsilon = -1 \end{cases} \quad
\bar w = \begin{cases}
\bar v^{-1} &\text{ if } \epsilon = 1 \\
\bar v^{-1}a_i &\text{ if } \epsilon = -1 \end{cases}
\]
In this case we say that the Wagner tail $(w,\bar w)$ \emph{arises from an occurence of $a_i^\epsilon$ in $\phi(a_i)$}. 
The \emph{fixed point index} of such a Wagner tail is equal to $-\epsilon$, the opposite of the exponent on the letter from which it arises.

Let $(w_1, \bar w_1)$ and $(w_2, \bar w_2)$ be Wagner tails. Then we say that $(w_1, \bar w_1)$ and $(w_2, \bar w_2)$ are \emph{directly related} when $\{w_1, \bar w_1\} \cap \{w_2, \bar w_2\} \neq \emptyset$. Two Wagner tails $(w, \bar w)$ and $(w', \bar w')$ are \emph{indirectly related} when there is a sequence of Wagner tails $(w,\bar w) = (w_0, \bar w_0), (w_1, \bar w_1), \dots, (w_k, \bar w_k) = (w', \bar w')$ such that $(w_i, \bar w_i)$ is directly related to $(w_{i+1}, \bar w_{i+1})$ for each $i$. This indirect relation is an equivalence relation, and the equivalence classes of Wagner tails are called the \emph{fixed point classes}. Wagner showed that, when $\phi$ has remnant, the number of fixed point classes having nonzero fixed point index sum is equal to $N(\phi)$. 

In this paper we will not compute $N(\phi)$ exactly, but we obtain a lower bound based on the following easy observation: $N(\phi)$ is greater than or equal to the number of Wagner tails which are not directly related to any others.

The set of homomorphisms with remnant (for which Wagner's algorithm applies) is very large. In fact, ``most'' endomorphisms have remnant, when measured as follows according to asymptotic density.

Let $G_p$ be the subset
of all words of $G$ of length at most $p$. The \emph{asymptotic density} (or simply
\emph{density}) of a subset $S\subset G$ is defined as 
\[ D(S) = \lim_{p \to \infty} \frac{|S \cap G_p|}{|G_p|}, \]
where $|\cdot|$ denotes the cardinality. The set $S$ is said to be
\emph{generic} if $D(S) = 1$. The density $D(S)$ can be thought of as the probability that a random element of $G$ is in the set $S$.

Similarly, if $S \subset G^k$ is a set of $k$-tuples of
elements of $G$, the asymptotic density of $S$ is defined as
\[ D(S) = \lim_{p \to \infty} \frac{|S \cap (G_p)^k|}{|(G_p)^k|}, \]
and $S$ is called \emph{generic} if $D(S) = 1$. 

An endomorphism $\phi:G \to G$ is equivalent combinatorially to a $m$-tuple of elements of $G$
(the $m$ elements are the words $\phi(a_1), \dots, \phi(a_m)$). Thus
the asymptotic density of a set of 
homomorphisms can be defined in the same sense as above, viewing the set of
homomorphisms as a collection of $m$-tuples. 

A theorem of Robert F.\ Brown in \cite{wagn99} established that
the set of endomorphisms with remnant is generic. Lemma 9 of \cite{stae11} (based on the work in \cite{ao96})
shows that in fact $R_k$ is generic for any $k$, where $R_k$ is the set of endomorphisms $\phi$ with $|\Rem_\phi a_i| \ge k$ for all $k$. (Throughout, for a word $w\in G$, we write $|w|$ for the word length of $w$ as a reduced word.)

In settings (such as Nielsen theory) where homotopy-invariant properties of continuous mappings are studied, there is a natural formulation of genericity for sets of continuous mappings. For a space $X$ with $G = \pi_1(X)$ finitely generated free, and a set $A$ of continuous selfmaps on $X$, define $A_\# =  \{ f_{\#} \mid [f] \subset A \}$, where $[f]$ is the homotopy class of $f$ and $f_\#$ is the induced homomorphism on $G$.
Then we say that $A$ is \emph{homotopy-generic} when $A_\#$ is generic. In this case we can informally say that, when considered according to homotopy classes, ``almost all'' selfmaps of $X$ are in $A$.

In this paper we will show that, in the above sense, ``almost all'' selfmaps of a bouquet of circles (or any space of this homotopy type) have periodic points of every period $n$, and in fact that the number of such points increases exponentially in $n$. We additionally derive a lower bound for the exponential growth rate of this quantity.

In the case where $m$, the rank of the group $G$, is 1, these results follow easily. In this case $G$ is isomorphic to the integers $\Z$, and each homomorphism is multiplication by some $d\in \Z$. Denote this homomorphism by $\phi_d:\Z \to \Z$. It is a classical result in Nielsen theory (see \cite{jian83}) that $N(\phi_d) = |1-d|$. Furthermore it is easy to see that $\phi_d^n = \phi_{d^n}$. Thus we will have $N(\phi^n) = |1-d^n|$, and this quantity grows exponentially whenever $|d| > 1$. 

The paper is organized as follows: In Section \ref{Wsection} we define a generic set of homomorphisms $S_l$ and show that all maps $f$ with $f_\#\in S_l$ have many periodic points of all periods. In Section \ref{mpersection} we show that these maps in fact have many periodic points of all \emph{minimal} periods. In Section \ref{othersection} we discuss some other dynamical invariants of these maps including the topological entropy and the asymptotic Nielsen number. The proof that $S_l$ is generic requires a detailed counting argument which is given in Section \ref{Dsection}.

\section{Periodic points of maps in $S_l$}\label{Wsection}
In this section we define a set of endomorphisms $S_l$ and show that for each map $\phi \in S_l$, the sequence $\{N(\phi^n)\}$ is nonzero and grows exponentially. Since $N(f^n) \le \#\Fix(f^n)$, this will imply that the sequence of numbers of periodic points $\{\#\Fix(f^n)\}$ is also nonzero and grows exponentially whenever $f_\# \in S_l$. 

Let $(w, \bar w)$ be a Wagner tail which arises from an occurrence of $a_i^{\pm 1}$ inside the word $\phi(a_i)$. If $\phi$ has remnant and this occurrence of $a_i^{\pm 1}$ is an inside letter (not the first or last) of the subword $\Rem_\phi a_i$, then 
we say that $(w,\bar w)$ \emph{occurs inside the remnant}.
For any endomorphism $\phi$, let $W(\phi)$ be the set of Wagner tails which occur inside the remnant.

\begin{lem}
For any homomorphism $\phi$ and any $(w,\bar w) \in W(\phi)$, the Wagner tail $(w,\bar w)$ is not directly related to any other Wagner tail of $\phi$. 
\end{lem}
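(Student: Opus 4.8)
The plan is to analyze what it means for a Wagner tail to occur inside the remnant and to show directly that such a tail cannot share a coordinate with any other Wagner tail. Suppose $(w,\bar w) \in W(\phi)$ arises from an occurrence of $a_i^\epsilon$ inside $\Rem_\phi a_i$, so that $\phi(a_i) = v a_i^\epsilon \bar v$ with both $v$ and $\bar v$ nonempty after the appropriate adjustment (since the occurrence is an inside letter of the remnant). First I would write down explicitly the two coordinates of $(w,\bar w)$ in the two cases $\epsilon = 1$ and $\epsilon = -1$ using the formulas from the definition, and observe that in each case $w$ ends with a nontrivial terminal segment of $v a_i^{-\epsilon}$ (more precisely, $w$ has the form $v$ or $v a_i^{-1}$, so it ends with a letter of the remnant or with $a_i^{-1}$ attached to the remnant), while $\bar w$ begins with a nontrivial initial segment of $\bar v^{-1}$. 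The key point is that the letters of $v$ and $\bar v$ adjacent to the distinguished $a_i^\epsilon$ lie strictly inside $\Rem_\phi a_i$, so these boundary letters of $w$ and $\bar w$ are letters of the remnant of $a_i$ (not its endpoints).

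Next I would consider an arbitrary second Wagner tail $(w', \bar w')$ and suppose toward a contradiction that $\{w,\bar w\} \cap \{w', \bar w'\} \neq \emptyset$. There are two kinds of Wagner tails to rule out: the tail $(1,1)$, and tails arising from some occurrence of a letter $a_k^\delta$ in $\phi(a_k)$. The tail $(1,1)$ is disposed of immediately, since both $w$ and $\bar w$ are nontrivial (their initial/terminal letters being letters of the remnant). For the remaining case, I would use the remnant hypothesis: by definition of $\Rem_\phi a_i$, the subword of $\phi(a_i)$ that is the remnant survives without cancellation in products $\phi(a_j)^{\pm1}\phi(a_i)\phi(a_k)^{\pm1}$, and Wagner's construction of tails and fixed point classes is precisely designed to track such noncancellation. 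Concretely, if $w$ (or $\bar w$) equalled one of the coordinates of $(w', \bar w')$, then — reading off the form of $w'$ and $\bar w'$ from the definition — one obtains an identification of group elements that forces a cancellation within the remnant of $a_i$ against a translate of some $\phi(a_k)^{\pm1}$, contradicting the defining property of the remnant. The subtlety is that the distinguished letter $a_i^\epsilon$ generating our tail sits strictly interior to the remnant, which is exactly what guarantees there is remnant material on both sides of it to block any such coincidence.

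I expect the main obstacle to be the bookkeeping in the last step: one must carefully enumerate the possible forms of $(w', \bar w')$ (arising from $a_k^{+1}$ versus $a_k^{-1}$, in $\phi(a_k)$ for $k = i$ versus $k \neq i$) and, for each of the four possible coordinate coincidences $w = w'$, $w = \bar w'$, $\bar w = w'$, $\bar w = \bar w'$, extract the resulting word equation and reduce it to a cancellation statement that the remnant condition forbids. This is routine but must be done with care about where the basepoint letter sits relative to the ends of the remnant; the hypothesis that the occurrence is an \emph{inside} letter of $\Rem_\phi a_i$ is used essentially in every case. Once all cases yield contradictions, we conclude that $(w,\bar w)$ is directly related to no other Wagner tail, as claimed.
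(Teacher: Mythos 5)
Your proposal is correct and takes essentially the same approach as the paper: both arguments use the fact that an occurrence strictly inside $\Rem_\phi a_i$ leaves remnant letters in each coordinate of the tail, so any coordinate coincidence with another tail would force cancellation of remnant material in a product $\phi(a_j)^{\pm 1}\phi(a_i)^{\pm 1}$, contradicting the remnant condition. Your explicit dismissal of the tail $(1,1)$ is a small completeness bonus, but the core argument and the case bookkeeping are the same as in the paper.
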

\begin{proof}
The proof follows easily from the definition of the Wagner tails, though we must check several cases. Let $(w_1,\bar w_1) \in W(\phi)$ and let $(w_2,\bar w_2)$ be some other Wagner tail (perhaps not in $W(\phi)$), and we will argue that $\{w_1, \bar w_1\} \cap \{ w_2, \bar w_2\}$ is empty.

The easiest case is when $w_1$ arises from an occurrence of $a_i$ in $\phi(a_i)$ and $w_2$ arises from an occurrence of $a_j$ in $\phi(a_j)$. Then we have reduced products:
\[ \phi(a_i) = w_1 a_i \bar w_1^{-1}, \quad \phi(a_j) = w_2 a_j \bar w_2^{-1}. \]
Since $(w_1,\bar w_1)\in W(\phi)$, each word $w_1$ and $\bar w_1^{-1}$ contains at least one letter of the remnant subword $\Rem_\phi a_i$. Because of this, it is impossible for any of $w_1, \bar w_1, w_2, \bar w_2$ to be equal, since this would imply that part of the remnant subword could cancel in some product $\phi(a_i)^{\pm 1} \phi(a_j)^{\pm 1}$. Thus $\{w_1, \bar w_1\} \cap \{ w_2, \bar w_2\}$ is empty.

Similarly we must check three other cases: $w_1$ arises from an occurrence of $a_i^{-1}$ in $\phi(a_i)$ and $w_2$ arises from an occurrence of $a_j$ in $\phi(a_j)$, $w_1$ arises from an occurrence of $a_i$ in $\phi(a_i)$ and $w_2$ arises from an occurrence of $a_j^{-1}$ in $\phi(a_j)$,  and $w_1$ arises from an occurrence of $a_i^{-1}$ in $\phi(a_i)$ and $w_2$ arises from an occurrence of $a_j^{-1}$ in $\phi(a_j)$. All of these cases are analogous to the above, and we omit the details.
\end{proof}

Any Wagner tail which is not directly related to any other must represent its own fixed point class with nonzero index, and so we obtain:
\begin{lem}\label{ngew}
For any $\phi$, we have $N(\phi) \ge \#W(\phi)$.
\end{lem}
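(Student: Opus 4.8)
The plan is to read off the bound directly from the previous lemma together with Wagner's characterization of $N(\phi)$. First I would dispose of the degenerate case: if $\phi$ does not have remnant then $\Rem_\phi a_i$ is undefined and $W(\phi) = \emptyset$, so the asserted inequality $N(\phi) \ge 0$ holds trivially. Hence I may assume $\phi$ has remnant, which is precisely the hypothesis under which Wagner's algorithm applies, so that $N(\phi)$ equals the number of fixed point classes (equivalence classes of Wagner tails under the indirect relation) whose fixed point index sum is nonzero.

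Next I would fix $(w,\bar w) \in W(\phi)$ and examine its fixed point class. By the preceding lemma, $(w,\bar w)$ is not directly related to any other Wagner tail of $\phi$. A one-line induction on the length of a relating chain then upgrades this to the statement that $(w,\bar w)$ is not indirectly related to any other Wagner tail, since any chain witnessing an indirect relation to a distinct tail would have to leave $(w,\bar w)$ by a direct relation to a distinct tail at its very first step. Therefore the fixed point class of $(w,\bar w)$ is the singleton $\{(w,\bar w)\}$.

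It remains to see that each such singleton class is counted by Wagner's formula and that distinct tails in $W(\phi)$ give distinct classes. For the former, $(w,\bar w)$ arises from an occurrence of $a_i^{\epsilon}$ with $\epsilon \in \{+1,-1\}$, so its fixed point index is $-\epsilon = \pm 1$; as the class is a singleton, its index sum is exactly this value, which is nonzero. For the latter, two distinct elements of $W(\phi)$ lie in the one-element classes consisting of themselves, which are distinct sets. Thus the assignment $(w,\bar w) \mapsto \{(w,\bar w)\}$ embeds $W(\phi)$ into the collection of fixed point classes with nonzero index sum, and by Wagner's theorem that collection has cardinality $N(\phi)$. Hence $N(\phi) \ge \#W(\phi)$.

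The substance of the argument lies entirely in the previous lemma, so I do not anticipate a genuine obstacle. The only points demanding a moment's attention are the passage from ``not directly related'' to ``its own fixed point class,'' which is the trivial chain-length induction above, and the observation that the index $-\epsilon$ of a Wagner tail is $\pm 1$ and in particular nonzero, so that such a singleton class really does contribute to Wagner's count for $N(\phi)$.
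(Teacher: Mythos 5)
Your proof is correct and follows essentially the same route as the paper, which simply combines the preceding lemma with Wagner's characterization of $N(\phi)$ via the observation that an unrelated Wagner tail forms its own singleton class of index $\pm 1$. Your extra care with the no-remnant case and the chain-length induction only makes explicit what the paper leaves implicit.
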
 

For any letter $a_i$ and reduced word $w$, let $\Phi_{a_i}(w)$ be the number of times that $a_i$ or $a_i^{-1}$ appear in $w$.
Our main result for this section is that $\{\#W(\phi^n)\}$ (and thus also $\{N(\phi^n)\}$) grows exponentially when $\phi$ is a member of the following set:
For some $l> 0$, let $S_l$ be the set of maps $\phi$ such that for any generators $a_i, a_j$ of $G$, we have $\Phi_{a_i} (\Rem_\phi a_j) \ge l$. 

We will require a lemma concerning the relationship between $\Rem_{\phi^{n-1}}a_i$ and $\Rem_{\phi^n} a_i$. 

\begin{lem}\label{iteratesrem}
Let $\phi$ have remnant with $\Phi_{a_i}(\Rem_\phi a_j) \ge l$ for some $i$ and $j$. Then $(\Rem_{\phi^{n-1}} a_i)^{\pm 1}$ occurs $l$ disjoint times (possibly with different exponents) as a subword of $\Rem_{\phi^n} a_j$. 
\end{lem}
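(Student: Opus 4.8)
The plan is to prove this by unwinding the definition of the remnant and tracking how the word $\phi^n(a_j)$ is built up from the word $\phi^{n-1}(a_j)$. We know that $\phi^n(a_j) = \phi\bigl(\phi^{n-1}(a_j)\bigr)$. The key structural fact is that $\Rem_\phi a_j$ is a subword of $\phi(a_j)$ consisting (by the remnant condition) of letters that do not cancel in products $\phi(a_k)^{\pm 1}\phi(a_j)\phi(a_k)^{\pm 1}$; in particular, inside $\Rem_\phi a_j$ there are $l$ disjoint occurrences of $a_i^{\pm 1}$. I would write $\Rem_\phi a_j = u_0 a_i^{\epsilon_1} u_1 a_i^{\epsilon_2} \cdots a_i^{\epsilon_l} u_l$ where the $u_t$ are (possibly trivial) words and the displayed $a_i^{\pm 1}$'s are $l$ chosen disjoint occurrences (there may be more occurrences of $a_i^{\pm1}$ hidden inside the $u_t$, but that only helps).

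Next I would apply $\phi^{n-1}$ to the ambient word. Because $\Rem_\phi a_j$ sits inside $\phi(a_j)$ without its boundary letters cancelling against the neighbors coming from adjacent generators, the word $\phi^{n-1}(\Rem_\phi a_j)$ sits inside $\phi^{n-1}(\phi(a_j)) = \phi^n(a_j)$ — more care is needed here: what I actually want is that $\phi^n(a_j)$ contains, as a subword surviving all cancellation, the reduced form of $\phi^{n-1}(a_i^{\epsilon_1})\phi^{n-1}(u_1)\phi^{n-1}(a_i^{\epsilon_2})\cdots$, and hence contains $l$ disjoint copies of the reduced form of $\phi^{n-1}(a_i^{\pm 1})$. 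Now the reduced form of $\phi^{n-1}(a_i)$ contains $\Rem_{\phi^{n-1}} a_i$ as a (noncancelling) subword by definition of the remnant of $\phi^{n-1}$, and the reduced form of $\phi^{n-1}(a_i^{-1})$ is the inverse of that word, so it contains $(\Rem_{\phi^{n-1}} a_i)^{-1}$. Therefore each of the $l$ chosen occurrences contributes a copy of $(\Rem_{\phi^{n-1}} a_i)^{\pm 1}$, with the exponent determined by $\epsilon_t$.

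The remaining point is to argue that these $l$ copies are genuinely disjoint and genuinely survive inside $\Rem_{\phi^n} a_j$ (not merely inside $\phi^n(a_j)$). Disjointness follows because the $l$ occurrences of $a_i^{\pm1}$ in $\Rem_\phi a_j$ are disjoint and separated by the (possibly empty) blocks $u_t$, so after applying $\phi^{n-1}$ the images $\phi^{n-1}(a_i^{\epsilon_t})$ occupy disjoint positions separated by the blocks $\phi^{n-1}(u_t)$; cancellation between adjacent images $\phi^{n-1}(a_i^{\epsilon_t})$ and $\phi^{n-1}(u_t)$ or $\phi^{n-1}(u_{t-1})$ can only eat into the ends of these words, and since $(\Rem_{\phi^{n-1}} a_i)$ is by construction the maximal noncancelling core of $\phi^{n-1}(a_i)$, the copies of $(\Rem_{\phi^{n-1}} a_i)^{\pm 1}$ themselves are untouched. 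Finally, that these copies lie inside $\Rem_{\phi^n} a_j$ rather than merely inside $\phi^n(a_j)$: the letters of $\phi^{n-1}(\Rem_\phi a_j)$ that survive in $\phi^n(a_j)$ are exactly letters that cannot cancel in any product $\phi^n(a_k)^{\pm1}\phi^n(a_j)\phi^n(a_k)^{\pm1}$, because such a product equals $\phi^{n-1}$ applied to $\phi(a_k)^{\pm1}\phi(a_j)\phi(a_k)^{\pm1}$ and the remnant of $\phi$ guarantees the core of $\phi(a_j)$ is not disturbed on the $\phi$-level — hence a fortiori not disturbed after one more application of $\phi^{n-1}$, once we restrict to the part of the remnant core that is itself the remnant core at level $n-1$. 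I expect the main obstacle to be writing this last cancellation bookkeeping cleanly, i.e., making precise the claim that ``the remnant core of a composite is controlled by the remnant cores of the factors''; it may be cleanest to prove it by induction on $n$, using the $n=1$ case trivially and passing from $n-1$ to $n$ by applying a single $\phi$ to the already-established copies of $(\Rem_{\phi^{n-1}} a_i)^{\pm1}$.
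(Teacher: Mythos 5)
Your proposal is correct and follows essentially the same route as the paper: decompose $\phi(a_j)$ to exhibit the $l$ occurrences of $a_i^{\pm 1}$ inside $\Rem_\phi a_j$, apply $\phi^{n-1}$ via $\phi^n = \phi^{n-1}\circ\phi$, and show the resulting copies of $(\Rem_{\phi^{n-1}}a_i)^{\pm 1}$ survive because any cancellation reaching them in a product $\phi^n(b)\phi^n(a_j)\phi^n(c)$ would force cancellation already at the $\phi$-level, contradicting that the $a_i^{\epsilon_t}$ lie inside $\Rem_\phi a_j$. The paper makes your ``a fortiori'' step precise by invoking Wagner's Lemma 3.6 (that $\phi^{n-1}$ has remnant) and a short contradiction argument showing the offending $x=\phi(b)$ would have to end in $a_i^{-\epsilon_1}u_0^{-1}$.
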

\begin{proof}
Our hypothesis implies that $a_i^{\pm 1}$ appears at least $l$ times in $\Rem_\phi a_j$.
Then there are words $u_k$ where 
\begin{equation}\label{ajletters}
\phi(a_j) = u_0 a_i^{\epsilon_1} u_1 a_i^{\epsilon_2} \dots a_i^{\epsilon_l} u_l 
\end{equation}
 is a reduced product where each $\epsilon_k \in \{+1, -1\}$, each $a_i^{\epsilon_k}$ above appears inside the remnant, and possibly some $u_k$ are trivial (but in that case $\epsilon_k = \epsilon_{k+1}$).

To examine $\Rem_{\phi^n}a_j$, we look at products of the form $\phi^n(b)\phi^n(a_j)\phi^n(c)$ where $b$ and $c$ are letters other than $a_j^{-1}$. 
Let $x = \phi(b)$ and $y=\phi(c)$, and we have
\begin{align*} 
\phi^n(b)\phi^n(a_j)\phi^n(c) 
&= \phi^{n-1}(x) \phi^{n-1}(\phi (a_j)) \phi^{n-1}(y) \\
&= \phi^{n-1}(x) 
[ \phi^{n-1}(u_0) \phi^{n-1}(a_i)^{\epsilon_1} \dots \phi^{n-1}(a_i)^{\epsilon_l} \phi^{n-1}(u_l)]  \phi^{n-1}(y).
\end{align*}
Inside the brackets above, each term contains a remnant which will not cancel with other terms inside the bracket. These remnants may cancel, however, when we take the products with $\phi^{n-1}(x)$ and $\phi^{n-1}(y)$.

We will argue that in fact none of the remnants of $\phi^{n-1}(a_i)^{\epsilon_k}$ inside the brackets above will cancel.
Let us assume for the sake of contradiction that, for example, $(\Rem_{\phi^{n-1}} a_i)^{\epsilon_1}$ cancels inside $\phi^{n-1}(a_i)^{\epsilon_1}$ above due to the product with $\phi^{n-1}(x)$. 

Lemma 3.6 of \cite{wagn99} states that if $\phi$ has remnant, then so does $\phi^k$ for any $k$. Thus $\phi^{n-1}$ has remnant, and so
the cancellation assumed above is only possible when there is some word $w$ such that $x$ has reduced form $x= w a_i^{-\epsilon_1}u_0^{-1}$.

Recall, however, that $x=\phi(b)$ for some letter $b\neq a_j^{-1}$. If we have $x= w a_i^{-\epsilon_1}u_0^{-1}$, there will be cancellation in the product $\phi(b)\phi(a_j)$. In particular $\Rem_{\phi}a_j$ must not include the occurance of $a_i^{\epsilon_1}$ from \eqref{ajletters}, which is a contradiction.
\end{proof}

An easy consequence of the above will also be useful:
\begin{lem}
If $\phi\in S_l$, then $\phi^n\in S_{l}$ for all $n>0$.
\end{lem}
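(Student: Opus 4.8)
The plan is to induct on $n$. The base case $n=1$ is precisely the hypothesis $\phi\in S_l$. For the inductive step, assume $\phi^{n-1}\in S_l$ with $n\ge 2$; we must show that for every pair of generators $a_j,a_k$ we have $\Phi_{a_k}(\Rem_{\phi^n}a_j)\ge l$.

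Fix $a_j$ and $a_k$ and choose any generator $a_i$. Since $\phi\in S_l$ it has remnant and $\Phi_{a_i}(\Rem_\phi a_j)\ge l$, so Lemma \ref{iteratesrem} applies and tells us that $(\Rem_{\phi^{n-1}}a_i)^{\pm 1}$ appears as $l$ pairwise disjoint subwords of $\Rem_{\phi^n}a_j$. The inductive hypothesis $\phi^{n-1}\in S_l$ gives $\Phi_{a_k}(\Rem_{\phi^{n-1}}a_i)\ge l$, and since $l>0$ this means $a_k^{\pm 1}$ occurs at least once in $\Rem_{\phi^{n-1}}a_i$; as inverting a reduced word preserves the number of occurrences of $a_k^{\pm 1}$, the same holds for $(\Rem_{\phi^{n-1}}a_i)^{-1}$. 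Each of the $l$ disjoint copies inside $\Rem_{\phi^n}a_j$ therefore contains its own occurrence of $a_k^{\pm 1}$, and because the copies are disjoint these contribute $l$ distinct occurrences of $a_k^{\pm 1}$ in $\Rem_{\phi^n}a_j$. Hence $\Phi_{a_k}(\Rem_{\phi^n}a_j)\ge l$, closing the induction.

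The substance of the argument is entirely contained in Lemma \ref{iteratesrem}; the only point needing care is the bookkeeping that ``occurs as a disjoint subword'' transfers occurrence counts. I would make explicit that a subword is a contiguous block of $\Rem_{\phi^n}a_j$, so any letter inside such a block is a genuine occurrence in $\Rem_{\phi^n}a_j$, and that disjoint blocks occupy disjoint ranges of positions, so their occurrences of $a_k^{\pm 1}$ cannot coincide. I would also record the elementary identity $\Phi_{a_k}(w^{-1})=\Phi_{a_k}(w)$ so that it is irrelevant which exponent each of the $l$ copies carries (recall $\Rem_{\phi^{n-1}}a_i$ and $\Rem_{\phi^n}a_j$ are defined since a map with remnant has remnant iterates). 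Beyond this there is no real obstacle.
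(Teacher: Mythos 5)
Your proof is correct and takes essentially the same route as the paper's: induction on $n$ with Lemma~\ref{iteratesrem} driving the inductive step. The only (harmless) bookkeeping difference is where the factor $l$ comes from: the paper embeds a single copy of $\Rem_{\phi^{n-1}} a_j$ in $\Rem_{\phi^n} a_j$ and extracts the $l$ occurrences of $a_i^{\pm 1}$ from the inductive hypothesis applied to that one copy, whereas you embed $l$ disjoint copies of $\Rem_{\phi^{n-1}} a_i$ and extract one occurrence of $a_k^{\pm 1}$ from each --- both are valid instantiations of the same lemma and yield the same bound.
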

\begin{proof}
The proof is by induction on $n$. The case $n=1$ is assumed. For the inductive case, we must show that $\Phi_{a_i}(\Rem_{\phi^n}a_j) \ge l$ for any $i$ and $j$. Since $\phi\in S_l$ we have $\Phi_{a_j}(\Rem_{\phi} a_j) \ge l$, and so Lemma \ref{iteratesrem} says that $(\Rem_{\phi^{n-1}}(a_j))^{\pm 1}$ occurs as a subword of $\Rem_{\phi^n}a_j$. Then we have
\[ \Phi_{a_i}(\Rem_{\phi^n}a_j) \ge \Phi_{a_i} (\Rem_{\phi^{n-1}} a_j) \ge l, \] 
where the last inequality is the induction hypothesis. 
\end{proof}

Our main result for this section follows easily from the following more technical fact:
\begin{lem}\label{philemma}
For any $\phi\in S_l$ and any generators $a_i$ and $a_j$, we have
\[ \Phi_{a_i}(\Rem_{\phi^n}a_j) \ge l^{n} m^{n-1}. \]
\end{lem}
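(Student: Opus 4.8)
The plan is to argue by induction on $n$. The base case $n=1$ is precisely the hypothesis $\phi\in S_l$, since $l^1m^0=l$. For the inductive step we assume the statement for $n-1$, namely $\Phi_{a_i}(\Rem_{\phi^{n-1}}a_p)\ge l^{n-1}m^{n-2}$ for all generators $a_i,a_p$; this is legitimate because $\phi^{n-1}\in S_l$ (by the preceding lemma) and $\phi^{n-1}$ has remnant (Lemma 3.6 of \cite{wagn99}), so the inductive statement applies to it.

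Now fix generators $a_i$ and $a_j$. Since $\phi\in S_l$, for \emph{each} of the $m$ generators $a_p$ we have $\Phi_{a_p}(\Rem_\phi a_j)\ge l$, so Lemma \ref{iteratesrem} produces $l$ disjoint occurrences of $(\Rem_{\phi^{n-1}}a_p)^{\pm 1}$ as subwords of $\Rem_{\phi^n}a_j$. The key point is that all $lm$ of these subwords --- the $l$ copies coming from $a_p$, for $p=1,\dots,m$ --- may be taken pairwise disjoint inside $\Rem_{\phi^n}a_j$. To establish this I would revisit the mechanism of Lemma \ref{iteratesrem}: each such copy is produced by a particular occurrence of a letter $a_p^{\pm 1}$ in $\Rem_\phi a_j$, and occurrences producing distinct copies (whether for the same $p$ or for different $p$) sit at distinct positions of the reduced word $\phi(a_j)$. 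Applying $\phi^{n-1}$, these occurrences expand, in the same left-to-right order, into non-overlapping blocks $\phi^{n-1}(a_p)^{\pm 1}$ inside $\phi^n(a_j)$; the non-cancellation argument in the proof of Lemma \ref{iteratesrem} shows that the remnant subword of each block survives into $\Rem_{\phi^n}a_j$, and since each surviving remnant lies inside its own block and the blocks do not overlap, the $lm$ surviving remnants are pairwise disjoint. I expect this bookkeeping --- upgrading the single-generator analysis of Lemma \ref{iteratesrem} so as to track all of the chosen occurrences simultaneously and verify that no two surviving remnants overlap --- to be the main obstacle; the rest is routine.

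Granting the disjointness, the estimate follows at once. The word $\Rem_{\phi^n}a_j$ contains $lm$ pairwise disjoint subwords, each equal to $(\Rem_{\phi^{n-1}}a_p)^{\pm 1}$ for some $p$, and each such subword contains at least $\Phi_{a_i}(\Rem_{\phi^{n-1}}a_p)\ge l^{n-1}m^{n-2}$ occurrences of $a_i^{\pm 1}$ (the count $\Phi_{a_i}$ being unchanged by inverting the word). Summing over the $lm$ disjoint subwords yields
\[ \Phi_{a_i}(\Rem_{\phi^n}a_j)\ \ge\ lm\cdot l^{n-1}m^{n-2}\ =\ l^n m^{n-1}, \]
which completes the induction.
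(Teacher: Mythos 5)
Your proof is correct and follows essentially the same route as the paper: induction on $n$, with the inductive step obtained by applying Lemma \ref{iteratesrem} to every generator $a_p$ and summing the $\Phi_{a_i}$-counts over the resulting $lm$ copies of $(\Rem_{\phi^{n-1}}a_p)^{\pm 1}$ inside $\Rem_{\phi^n}a_j$. The cross-generator disjointness you flag as the main obstacle is indeed needed but is left implicit in the paper, which simply writes the sum $\sum_{k=1}^m l\cdot\Phi_{a_i}(\Rem_{\phi^{n-1}}a_k)$; your block-by-block justification of it is a reasonable (and correct) way to fill that in.
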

\begin{proof}
The proof is by induction. The case $n=1$ is simply that $\Phi_{a_i}(\Rem_\phi a_j) \ge l$, which is implied by our assumption that $\phi\in S_l$.

For the inductive case, since $\phi\in S_l$ we know that $\Phi_{a_k}(\Rem_\phi a_j) \ge l$ for each $k$ and $j$. Then by Lemma \ref{iteratesrem} we have that $(\Rem_{\phi^{n-1}}a_k)^{\pm 1}$ appears $l$ disjoint times in $\Rem_{\phi^n} a_j$ for each $k$. Thus we have
\begin{align*}
\Phi_{a_i} (\Rem_{\phi^n} a_j) &\ge \sum_{k=1}^m l \cdot \Phi_{a_i}(\Rem_{\phi^{n-1}} a_k) \ge \sum_{k=1}^m l \cdot l^{n-1} m^{n-2} = l^nm^{n-1}
\end{align*}
where the second inequality is the induction hypothesis.
\end{proof}

Now we prove our main result for this section.
\begin{thm}\label{wbound}
For any $\phi\in S_l$, we have 
\[\# W(\phi^n) \ge l^{n}m^n - 2m. \]
\end{thm}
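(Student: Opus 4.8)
The plan is to bound $\#W(\phi^n)$ below by counting, for each generator $a_i$ separately, the Wagner tails of $\phi^n$ coming from occurrences of $a_i^{\pm 1}$ that lie strictly interior to the word $\Rem_{\phi^n}a_i$, and then summing over $i$. The key input is Lemma~\ref{philemma} with $j=i$: the reduced word $\Rem_{\phi^n}a_i$ contains at least $l^n m^{n-1}$ occurrences of $a_i^{\pm 1}$. Of these, at most two — the first and last letters of $\Rem_{\phi^n}a_i$, should those happen to be $a_i^{\pm1}$ — can fail to be ``inside'' letters of $\Rem_{\phi^n}a_i$. So at least $l^n m^{n-1}-2$ occurrences of $a_i^{\pm1}$ are interior to $\Rem_{\phi^n}a_i$; and since $\Rem_{\phi^n}a_i$ is a subword of $\phi^n(a_i)$, each such occurrence produces a Wagner tail which, by definition, occurs inside the remnant, hence lies in $W(\phi^n)$.

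To conclude I would check that distinct such occurrences contribute distinct elements of the set $W(\phi^n)$, so that the count above is genuinely a lower bound for the cardinality $\#W(\phi^n)$ and not merely for a number of occurrences. From a Wagner tail $(w,\bar w)$ arising from an occurrence of $a_i^\epsilon$ in $\phi^n(a_i)$ one reads off the sign $\epsilon$ (it is $-1$ exactly when $w$ ends in the letter $a_i^{-1}$) and then the position of the occurrence in $\phi^n(a_i)$ (from $|w|$, using the reduced-word conventions in the definition of the Wagner tails); hence ``occurrence $\mapsto$ tail'' is injective on each single $\phi^n(a_i)$. That an interior occurrence in $\Rem_{\phi^n}a_i$ and one in $\Rem_{\phi^n}a_j$ with $i\neq j$ cannot share a tail uses that $\phi^n$ has remnant (Lemma~3.6 of \cite{wagn99}): such a coincidence would force a letter of $\Rem_{\phi^n}a_i$ to cancel in a product of the form $\phi^n(a_j)^{-1}\phi^n(a_i)$ with $j\neq i$, which the remnant condition forbids — the same style of case analysis as in the proof of the first lemma of this section. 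With injectivity in hand,
\[ \#W(\phi^n)\;\ge\;\sum_{i=1}^m\bigl(l^n m^{n-1}-2\bigr)\;=\;l^n m^n-2m, \]
and the inequality is anyway vacuous wherever its right-hand side is negative.

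I expect Lemma~\ref{philemma} and the ``minus two endpoints'' bookkeeping to be completely routine; the one place needing genuine care is the injectivity step, i.e.\ confirming that the $l^n m^{n-1}-2$ interior occurrences for a fixed $i$ (and those for different $i$) really give that many \emph{distinct} Wagner tails rather than collapsing onto a smaller set. The remnant hypothesis on $\phi$, inherited by $\phi^n$, is precisely what prevents such collapses, and the required argument closely parallels the case analysis already carried out for the first lemma of this section.
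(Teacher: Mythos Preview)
Your proposal is correct and follows essentially the same approach as the paper's own proof: apply Lemma~\ref{philemma} with $j=i$, subtract $2$ for the endpoints of each $\Rem_{\phi^n}a_i$, and sum over $i$ to obtain $l^nm^n-2m$. You are in fact more careful than the paper, which simply asserts that $\#W(\phi^n)$ equals the count of interior occurrences without pausing over the injectivity issue you raise and resolve.
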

\begin{proof}
Recall that $\#W(\phi^n)$ is the number of Wagner tails of $\phi^n$ which occur inside the remnant. This will equal the sum over each generator $a_i$ of the number of times that $a_i^{\pm 1}$ appears inside (not as the first or last letter) of $\Rem_{\phi^n} a_i$. Subtracting to account for possible occurrences as the first or last letter, we have
\[
\#W(\phi^n) \ge \sum_{i=1}^m \Phi_{a_i} (\Rem_{\phi^n} a_i) - 2. 
\]

By Lemma \ref{philemma} we obtain
\[ \#W(\phi^n) \ge \sum_{i=1}^m l^nm^{n-1} - 2 = l^nm^n - 2m \]
as desired.
\end{proof}

Since, as we show in Section \ref{Dsection}, the set $S_l$ is generic for any $l$, the above implies that there are generic sets on which $\#W(\phi^n)$ is arbitrarily high for any $n$. 
This will imply strong statements about homotopy-generic behavior of Nielsen numbers and periodic points of continuous maps. Since $\#\Fix(f) \ge N(f) \ge \#W(f_\#)$, we obtain:
\begin{cor}\label{periodiccor}
For any real number $r$, the set of selfmaps with
$ \#\Fix(f^n) \ge r $
for all $n$ is homotopy-generic.
\end{cor}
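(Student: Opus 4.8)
The plan is to read the corollary off of Theorem \ref{wbound}, the genericity of $S_l$ (proved in Section \ref{Dsection}), and the chain of inequalities $\#\Fix(f^n) \ge N(f^n) \ge \#W(f_\#^n)$. Given $r$, first choose a positive integer $l$ large enough that $m(l-2) \ge r$; any $l \ge r/m + 2$ will do. Since then $lm \ge 1$, we have $(lm)^n \ge lm$ for every $n \ge 1$, so Theorem \ref{wbound} gives
\[ \#W(\phi^n) \ge l^n m^n - 2m \ge lm - 2m = m(l-2) \ge r \]
for every $\phi \in S_l$ and every $n \ge 1$.

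Next I would pass to continuous maps. For a selfmap $f$ of $X$, functoriality of $\pi_1$ gives $(f^n)_\# = (f_\#)^n$, so whenever $f_\# \in S_l$ we get, using Lemma \ref{ngew},
\[ \#\Fix(f^n) \ge N(f^n) = N(f_\#^n) \ge \#W(f_\#^n) \ge r \]
for all $n$. Hence the set $A = \{ f : f_\# \in S_l \}$ is contained in the set $B$ of selfmaps with $\#\Fix(f^n) \ge r$ for all $n$. Since $X$ has the homotopy type of a bouquet of circles, every endomorphism of $G$ is induced by some selfmap, so $A_\# = S_l$ and therefore $S_l = A_\# \subseteq B_\# \subseteq G^m$. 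From $D(S_l) = 1$ and the sandwich
\[ \frac{|S_l \cap (G_p)^m|}{|(G_p)^m|} \le \frac{|B_\# \cap (G_p)^m|}{|(G_p)^m|} \le 1 \]
we conclude $D(B_\#) = 1$ on letting $p \to \infty$, i.e.\ $B$ is homotopy-generic.

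The argument is essentially bookkeeping, and the only point requiring a moment's care is the choice of $l$: the bound $l^n m^n - 2m$ must exceed $r$ simultaneously for all $n$, and since $(lm)^n$ is nondecreasing in $n$ the binding case is $n=1$, which is exactly what the condition $m(l-2) \ge r$ handles. The genuinely substantial input — that $S_l$ is generic for every $l$ — is deferred to Section \ref{Dsection}, so there is no real obstacle inside the proof of the corollary itself. (When $m=1$ one may alternatively appeal to the elementary identity $N(\phi_d^n) = |1-d^n|$ from the introduction, noting that $\{\phi_d : |d| \ge l\}$ has density $1$ in $\Z$; but the argument above covers this case too, since there $S_l = \{\phi_d : |d| \ge l\}$.)
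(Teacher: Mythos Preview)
Your proof is correct and follows exactly the route the paper takes: combine Theorem \ref{wbound} with the genericity of $S_l$ (Section \ref{Dsection}) and the chain $\#\Fix(f^n) \ge N(f^n) \ge \#W(f_\#^n)$. The paper leaves the choice of $l$ and the passage from $D(S_l)=1$ to homotopy-genericity of $B$ implicit; you have simply made these steps explicit, including the observation that the binding case of the bound $l^n m^n - 2m$ is $n=1$.
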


\section{Periodic points with minimal periods}\label{mpersection}
The material above concerns periodic points of $f$, that is, fixed points of $f^n$. Using a slightly more sophisticated argument we can estimate the number of periodic points of $f$ with minimal period $n$, that is, fixed points of $f^n$ which are not fixed points of $f^m$ for any $m<n$. The basic argument is illustrated in the following result, which will be strengthened later. 

\begin{thm}\label{minperiod}   
For $f_\# = \phi\in S_3$, the map $f$ has a periodic point of minimal period $n$ for each $n$.
\end{thm}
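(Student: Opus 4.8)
The plan is to count fixed point classes using the Nielsen periodic point theory that Hart, Heath, and Keppelmann developed from Wagner's algorithm \cite{hhk1, hhk2}. Recall that for $d\mid n$ there is a boosting function $\iota_{d,n}$ carrying each fixed point class of $\phi^d$ into the unique fixed point class of $\phi^n$ containing it, and a fixed point class of $\phi^n$ is called \emph{reducible} if it lies in the image of $\iota_{d,n}$ for some proper divisor $d\mid n$. Since every such $d$ divides $n/p$ for some prime $p\mid n$, and $\iota_{d,n}=\iota_{n/p,n}\circ\iota_{d,n/p}$, it is enough to consider the maps $\iota_{n/p,n}$. The point of this machinery is the elementary observation that an essential fixed point class $\mathbf F$ of $\phi^n$ which is \emph{not} reducible must contain a point of minimal period $n$: $\mathbf F$ is nonempty since its index is nonzero, and a point $x\in\mathbf F$ of minimal period $m<n$ would have $m\mid n$ and would force $\mathbf F=\iota_{m,n}(\mathbf E)$ for $\mathbf E$ the class of $x$ under $\phi^m$, contradicting irreducibility. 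So it suffices to exhibit a single essential irreducible fixed point class of $\phi^n$.

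Because the irreducible classes are the complement of the reducible ones, it suffices to show $N(\phi^n)>R_n$, where $R_n$ is the number of reducible fixed point classes of $\phi^n$. On one side, $R_n\le\sum_{p\mid n}|\mathrm{Im}\,\iota_{n/p,n}|\le\sum_{p\mid n}(\text{number of fixed point classes of }\phi^{n/p})$, and since $\phi^{n/p}$ again has remnant (Lemma 3.6 of \cite{wagn99}) this is at most the number of Wagner tails of $\phi^{n/p}$, i.e. $R_n\le\sum_{p\mid n}\bigl(1+\sum_k\Phi_{a_k}(\phi^{n/p}(a_k))\bigr)$. On the other side, $N(\phi^n)\ge\#W(\phi^n)$ by Lemma \ref{ngew}, and rather than use the crude bound of Theorem \ref{wbound} I would re-run the argument of Lemmas \ref{iteratesrem}--\ref{philemma} stopping at the intermediate level $n/p$: iterating Lemma \ref{iteratesrem}, $\Rem_{\phi^n}a_i$ contains at least $l^{n-n/p}m^{n-n/p-1}$ pairwise disjoint copies of $(\Rem_{\phi^{n/p}}a_k)^{\pm1}$ for each $k$, and therefore
\[
\#W(\phi^n)\ \ge\ \sum_i\Phi_{a_i}(\Rem_{\phi^n}a_i)-2m\ \ge\ l^{n-n/p}m^{n-n/p-1}\sum_k|\Rem_{\phi^{n/p}}a_k|-2m .
\]
Choosing $p$ to be the smallest prime factor of $n$ gives $n-n/p\ge n/2$, so the left side carries an amplification factor $l^{n-n/p}\ge 3^{n/2}$ over the level-$(n/p)$ data while the reducibility bound involves only $\omega(n)$ copies of level-$(n/p)$ data; it is this exponential gap, together with the hypothesis $l\ge 3$, that should force $N(\phi^n)>R_n$.

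The step I expect to be the genuine obstacle is precisely this last comparison. The count of Wagner tails of $\phi^{n/p}$ is governed by the full word lengths $|\phi^{n/p}(a_k)|$, whereas the lower bound for $\#W(\phi^n)$ only obviously reflects the remnant lengths $|\Rem_{\phi^{n/p}}a_k|$, and for maps whose iterates have long non-remnant prefixes or suffixes these two quantities can differ by a large factor, so the $l^{n-n/p}$ amplification must be shown to absorb that discrepancy together with the additive constants. One must therefore either bound the cut-off prefixes and suffixes of $\phi^{n/p}(a_k)$ directly, or --- what is likely cleaner --- argue that a suitably \emph{interior} Wagner tail in $W(\phi^n)$ (one arising from an occurrence of $a_i$ that sits strictly between two other occurrences inside every lower-level remnant containing it; this is exactly where having $l\ge 3$ rather than merely $l\ge 1$ is used) cannot lie in the image of any $\iota_{n/p,n}$, since a reducible class is represented by a word of the form $x\,\phi^{n/p}(x)\cdots\phi^{n-n/p}(x)$ whose period-$(n/p)$ repetition structure such an interior tail does not have. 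Finally, the finitely many small values of $n$, where the factor $l^{n/2}$ is not yet comfortably large, should be checked by hand.
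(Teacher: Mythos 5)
Your strategy --- exhibit an essential irreducible fixed point class of $\phi^n$ by bounding the number of reducible classes from above via Wagner tails at the levels $n/p$ and the number of essential classes from below via $\#W(\phi^n)$ --- is a legitimate route in principle and is genuinely different from the paper's. But the quantitative comparison on which it rests has a real gap, which you correctly flag but do not close, and as set up it cannot be closed. Your upper bound on the reducible classes, $\sum_{p\mid n}\bigl(1+\sum_k\Phi_{a_k}(\phi^{n/p}(a_k))\bigr)$, is governed by the full reduced word lengths $|\phi^{n/p}(a_k)|$ and hence grows like $L^{n/p}$ with $L=\max_i|\phi(a_i)|$; your lower bound on $\#W(\phi^n)$ is governed by remnant data and grows like $(lm)^n$ (indeed $l^{n-n/p}m^{n-n/p-1}\sum_k|\Rem_{\phi^{n/p}}a_k|\ge l^nm^n$ by Lemma \ref{philemma}). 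Nothing in the definition of $S_3$ bounds $L$ in terms of $lm$: the non-remnant prefixes and suffixes and the remnant words themselves can be arbitrarily long. So for $n$ even and $L>(lm)^2$ the upper bound of order $L^{n/2}$ eventually dwarfs the lower bound of order $(lm)^n$, and $N(\phi^n)>R_n$ does not follow. This is not a ``finitely many small $n$'' issue to be checked by hand; it is a multiplicative gap that persists for all $n$. Your proposed repair (an interior Wagner tail cannot lie in the image of any boosting map because it lacks the period-$(n/p)$ repetition structure) is the right kind of idea but is left entirely unproved, and it is exactly where the hard work would be.

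The paper avoids this counting problem altogether. It uses the address machinery of Hart--Heath--Keppelmann: each fixed point of the standard form of $f^n$ has an address $r_1r_2\cdots r_n$ of letter locations, and their Lemma 4.9 says the minimal period is exactly $n$ provided no proper divisor $d$ of $n$ satisfies $r_i=r_j$ whenever $i\equiv j\ (\mathrm{mod}\ d)$. One then directly constructs a single round trip: fix $k$, choose $a_{j_1}^{\pm1}$ interior to $\Rem_\phi a_k$ with $j_1\ne k$, then successively $a_{j_i}^{\pm1}$ in $\phi(a_{j_{i-1}})$ with $j_i\ne k$, and finally an occurrence of $a_k^{\pm1}$ in $\phi(a_{j_{n-1}})$. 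Since $r_2,\dots,r_n$ all lie in blocks other than the $\phi(a_k)$ block, $r_1\ne r_i$ for all $i\ge2$, which kills every candidate period $d<n$ at once; the hypothesis $l\ge3$ is used only to guarantee that each remnant contains an interior occurrence of a generator other than $a_k$. The resulting fixed point arises from a letter inside the remnant, so it survives homotopy. To salvage your approach you would need either to control $|\phi^{n/p}(a_k)|$ by remnant lengths (false in general over $S_3$) or to prove the ``interior tails are unboostable'' claim, which in effect reproduces the paper's address argument.
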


In this section we will follow the notation in Section 4 of \cite{hhk1}. When $\phi$ is the induced homomorphism of some continuous mapping $f$ given in a linearized ``standard form'' (any map can be changed by homotopy to its standard form), each fixed point of $f$ is given an ``address'' of the form $r_1 \dots r_n$. The definition of the addresses is a bit technical, but it will be illustrated in our proof. Each $r_i$ appearing in the address refers to the location of a letter in the list $(\phi(a_1), \dots, \phi(a_m))$. For example, for the homomorphism given by
\begin{equation}
\phi(a) = a^3b^3\underline{a}, \quad \phi(b) = b^2a\underline{b}a, 
\end{equation}
the underlined letter $a$ has location $7$ (it is the seventh letter appearing above), and the underlined $b$ has location $11$. More details are given in \cite{hhk1}.

\begin{proof}[Proof of Theorem \ref{minperiod}]
We will prove that there exists a fixed point $x$ with address $r_1r_2\cdots r_n$ of $\phi^n$ such that there is no divisor $d$ of $n$ ($d<n$) such that $r_i=r_j$ whenever $i\equiv j \mod d$. Then by Lemma 4.9 in~\cite{hhk1}, the minimal period of $x$ is $n$. 

For each $i=1, 2, \cdots, m$, let 
$$
s_i=\sum_{j=1}^i |f(a_j)|.
$$

Assume that $n\geq 2$ (the statement of the theorem for $n=1$ is obvious), and choose some $k\in \{1,\dots,m\}$.

Since for any generator $a_j$ of $G$, we have $\Phi_{a_j}(\Rem_\phi a_k)\geq 3$, we can choose a letter $a^{\pm 1}_{j_1}$ in the inside of $\Rem_\phi a_1$ with ${j_1} \neq k$. Choose one such letter $a^{\pm 1}_{j_1}$ 
and let $r_1$ be the location corresponding to the chosen letter. Then we have $s_{k-1} < r_1\leq s_k$ where $s_0=0$.
(For example when $m=3$ and $k=1$ and $\phi(a_1)=a_2a_1a_3^{-1}a_1$, if $a_{j_1}=a_2$ then $r_1=1$ and if $a_{j_1}^{-1}=a_3^{-1}$ then $r_1=3$.)

Since $\Phi_{a_j}(\Rem_\phi a_{j_1})\geq 3$ for any generator $a_j$, we can choose a letter $a^{\pm 1}_{j_2}$ in    
$\phi (a_{j_1})$ with $j_2\neq k$. Let $r_2$ be the location corresponding to the chosen letter $a^{\pm 1}_{j_2}$. 
Then we have $s_{j_1-1}< r_2\leq s_{j_1}$, and since $j_1\neq k$, we have $r_1\neq r_2$.

For each $i=3,4,\cdots, n-1$, repeating the same arguments, we can choose a letter $a^{\pm 1}_{j_i}$ in $\phi (a_{j_{i-1}})$ with $j_i\neq k$ and so the corresponding location $r_i$ satisfies $s_{j_{i-1}-1}< r_i\leq s_{j_{i-1}}$. 
Since $j_{i-1}\neq k$, we also have $r_1\neq r_i$.

Since $\Phi_{a_k}(\Rem_\phi a_{j_{n-1}})\geq l$, there exists $a^{\pm 1}_k$ in $\phi (a_{j_{n-1}})$. Choose one such $a^{\pm 1}_k$
and let $r_n$ be the corresponding location. Since $k\neq j_{n-1}$, we have $r_1 \neq r_n$ as well.

By Lemma 4.6 in~\cite{hhk1}, there is a unique fixed point $x$ of $\bar f^n$ (where $\bar f$ is the standard form of $f$) associated with a round trip $(r_1, r_2, \cdots, r_n, r_1)$, i.e, the address of $x$ is $r_1r_2\cdots r_n$. Since for each $i=2, 3, \cdots, n$, we have $r_i\neq r_1$, there is no divisor $d$ of $n$ ($d<n$) such that $r_i=r_j$ whenever $i\equiv j \mod d$. Thus by Lemma 4.9 in~\cite{hhk1}, the minimal period of $x$ is $n$.  

Since $x$ is a fixed point of $\bar f^n$ arising from an occurrence of $a_k$ inside the remnant of $\phi(a_k)$, we can be sure that the periodic point $x$ cannot be removed by homotopy, and so the original map $f$ (being homotopic to $\bar f$) has a periodic point of minimal period $n$. In fact any two distinct periodic points of $\bar f$ of minimal period $n$ constructed in the above way will yield two distinct periodic points of $f$ of minimal period $n$.
\end{proof}

We will illustrate the proof above in an example.

\begin{exl} Let $\phi$ be a map defined by
\begin{align*}
\phi(a)&=abc,\hspace{3in}\\
\phi(b)&=ca^{-1}ba,\\
\phi(c)&=a^{-1}c^{-1}ab.
\end{align*}

Then $\phi\in S_1$, and the unreduced form of $\phi^3$ is
\begin{align*}
\phi^3(a)=&(abc\cdot ca^{-1}ba\cdot a^{-1}c^{-1}ab)\cdot (\underset{6_3}{\underline{a^{-1}}}c^{-1}ab\cdot c^{-1}b^{-1}a^{-1}\cdot ca^{-1}ba\cdot abc)\\
               &\cdot(c^{-1}b^{-1}a^{-1}\cdot b^{-1}\underset{13_3}{\underline{a^{-1}}}ca\cdot abc\cdot ca^{-1}ba),\\
\phi^3(b)=&(c^{-1}\underset{18_3}{\underline{b^{-1}}}a^{-1}\cdot b^{-1}a^{-1}ca\cdot abc\cdot ca^{-1}ba)\cdot (b^{-1}a^{-1}ca\cdot a^{-1}b^{-1}ac^{-1}\cdot 
c^{-1}b^{-1}a^{-1})\\
              &\cdot (a^{-1}c^{-1}ab\cdot c^{-1}b^{-1}a^{-1}\cdot ca^{-1}ba\cdot abc)\cdot (abc\cdot ca^{-1}ba\cdot a^{-1}c^{-1}ab),\\
\phi^3(c)=&(b^{-1}a^{-1}ca\cdot a^{-1}b^{-1}a\underset{33_3}{\underline{c^{-1}}}\cdot c^{-1}b^{-1}a^{-1})\cdot (a^{-1}b^{-1}ac^{-1}\cdot \underset{36_3}{\underline{c^{-1}}}b^{-1}a^{-1}\cdot a^{-1}c^{-1}ab\cdot abc)\\
                &\cdot(abc\cdot ca^{-1}ba\cdot a^{-1}\underset{41_3}{\underline{c^{-1}}}ab)\cdot(a^{-1}c^{-1}ab\cdot c^{-1}b^{-1}a^{-1}\cdot ca^{-1}ba\cdot abc).
\end{align*}
There are 46 fixed points of $\phi^3$ including the base point $0_3$, denoted by $0_3, 1_3, 2_3, \cdots, 45_3$.

We will apply the proof of the above theorem. First, we can choose letters $b$ or $c$ in $\phi(a)$. Take the letter $b$. Then we have $r_1=2$. For $r_2$, we can choose letters $c$ or $b$ in $\phi(b)$. Take the letter $c$ in $\phi(b)$. Then we have $r_2=4$. For $r_3=r_n$, we can choose the letters $a^{-1}$ or $a$ in $\phi(c)$. Choose the letter $a^{-1}$. Then $r_3=8$. The address is
$$
r_1r_2r_3=248
$$
and the fixed point associated with this address is $6_3$. By Lemma 4.9 of \cite{hhk1}, the minimal period of $6_3$ is 3.

In fact in this case we see that that $r_2r_3r_1=482$ and $r_3r_1r_2=824$, and thus the orbit of the fixed point $6_3$ is 
$$
\langle 6_3, 18_3, 33_3\rangle
$$ 
and the minimal period of $6_3$ is 3. 

For (another) example, for the address 
$$ 
r_1r_2r_3=39(10)
$$
the associated periodic point of minimal period 3 is $13_3$ which has orbit
$$
\langle 13_3, 36_3, 41_3\rangle .
$$ 
\end{exl}

The proof of Theorem \ref{minperiod} extends easily to give a lower bound on the number of periodic points of minimal period $n$. Let $P_n(f)$ be the number of periodic points of $f$ of minimal period $n$. We obtain:
\begin{thm}
For $f_\# = \phi \in S_l$ with $l\ge 1$ and for each $n$, we have 
\[ P_n(f) \ge m((m-1)l-2)(m-1)^{n-2}l^{n-1}. \]
\end{thm}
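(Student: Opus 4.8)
The plan is to rerun the argument in the proof of Theorem~\ref{minperiod}, but rather than producing a single admissible address $r_1r_2\cdots r_n$ we count how many distinct addresses that construction produces, and then pass from addresses to periodic points using the injectivity built into Lemma~4.6 of \cite{hhk1}. Assume $n\ge 2$; for $n=1$ one has $P_1(f)=\#\Fix(f)\ge N(f)$ and Theorem~\ref{wbound} applies. Fix $k\in\{1,\dots,m\}$. In the construction the digit $r_1$ is the location of a letter $a_{j_1}^{\pm1}$ lying strictly inside $\Rem_\phi a_k$ with $j_1\ne k$; summing $\Phi_{a_j}(\Rem_\phi a_k)\ge l$ (valid since $\phi\in S_l$) over the $m-1$ indices $j\ne k$ gives at least $(m-1)l$ occurrences of such letters inside $\Rem_\phi a_k$, and discarding the at most two endpoint letters of the remnant leaves at least $(m-1)l-2$ choices for $r_1$. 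For $2\le i\le n-1$ the digit $r_i$ is the location of a letter $a_{j_i}^{\pm1}$ in $\phi(a_{j_{i-1}})$ with $j_i\ne k$, and again $\phi\in S_l$ furnishes at least $(m-1)l$ such occurrences already inside $\Rem_\phi a_{j_{i-1}}$; crucially this bound is \emph{uniform}, independent of the history $j_{i-1}$. Finally $r_n$ is the location of some $a_k^{\pm1}$ in $\phi(a_{j_{n-1}})$, and $\Phi_{a_k}(\Rem_\phi a_{j_{n-1}})\ge l$ supplies at least $l$ choices. Multiplying the per-step counts gives at least
\[
\bigl((m-1)l-2\bigr)\bigl((m-1)l\bigr)^{n-2}\,l=\bigl((m-1)l-2\bigr)(m-1)^{n-2}l^{n-1}
\]
admissible addresses for this fixed $k$.

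Next I would argue that, as $k$ ranges over $\{1,\dots,m\}$, these addresses yield pairwise distinct periodic points of $f$ of minimal period $n$. Each admissible address determines a valid round trip $(r_1,\dots,r_n,r_1)$, hence by Lemma~4.6 of \cite{hhk1} a unique fixed point $x$ of $\bar f^n$, and distinct round trips give distinct fixed points. By construction $r_i\ne r_1$ for $i=2,\dots,n$, since $r_1$ lies in the block $\phi(a_k)$ while every later digit lies in a block $\phi(a_{j_{i-1}})$ with $j_{i-1}\ne k$; since any divisor $d<n$ of $n$ satisfies $2\le 1+d\le n$, there is no such $d$ with $r_i=r_j$ whenever $i\equiv j\bmod d$, so Lemma~4.9 of \cite{hhk1} gives minimal period $n$ for $x$. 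Because $r_1$ lies strictly inside $\Rem_\phi a_k$, the final paragraph of the proof of Theorem~\ref{minperiod} shows that $x$ persists under the homotopy to $f$ and that distinct such $x$ give distinct periodic points of $f$; moreover addresses coming from different values of $k$ already differ in their first digit, the blocks $\phi(a_k)$ being disjoint. Summing over $k=1,\dots,m$ then yields
\[
P_n(f)\ge m\bigl((m-1)l-2\bigr)(m-1)^{n-2}l^{n-1},
\]
which is the desired bound; if $(m-1)l\le 2$ the right-hand side is nonpositive and the inequality is trivial.

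The purely computational part (the multiplicative address count) is routine. The step that needs genuine care is exactly the one highlighted above: the lower bound on the number of choices at each stage must be uniform in which generator the previous step landed on, and this is precisely what membership in $S_l$, rather than mere remnant, provides via $\Phi_{a_j}(\Rem_\phi a_{j_{i-1}})\ge l$ for all $j$ and all $j_{i-1}$. The main obstacle I anticipate is the no-overcounting bookkeeping: one must check that an admissible address is recovered uniquely from the fixed point of $\bar f^n$ it determines, that two admissible addresses --- even if one is a cyclic rotation of the other coming from a different $k$ --- correspond to different fixed points of $\bar f^n$ and hence to different periodic points of $f$, and that ``strictly inside $\Rem_\phi a_k$'' really does force the corresponding periodic point of $\bar f^n$ to be nonremovable (equivalently, the associated Wagner tail of $\phi^n$ to lie inside $\Rem_{\phi^n}a_k$), which is exactly the point already invoked in the proof of Theorem~\ref{minperiod}.
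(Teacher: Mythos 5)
Your proposal is correct and follows essentially the same route as the paper: fix $k$, count at least $(m-1)l-2$ choices for $r_1$ inside $\Rem_\phi a_k$, at least $(m-1)l$ for each intermediate digit, and at least $l$ for $r_n$, then multiply and sum over $k$. The additional bookkeeping you flag (distinct addresses give distinct fixed points of $\bar f^n$, persistence under homotopy) is exactly what the paper relies on implicitly via Lemmas 4.6 and 4.9 of the Hart--Heath--Keppelmann reference, so no new ideas are needed.
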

\begin{proof}
The proof of Theorem \ref{minperiod} constructs a point of minimal period $n$ based on several independent choices. The desired bound will be the product of the number of choices at each stage. 

We begin with $k$, for which there are $m$ choices.

For $r_1$ there are at least $m-1$ choices for $j_1$ ($a_{j_1}$ can equal any generator except $a_k$), followed by at least $l$ choices for the location of $a^{\pm 1}_{j_1}$ inside $\Rem_\phi a_k$. Because we must make our selection inside the remnant, we will subtract 2 from the available choices to exclude the first and last letters of the remnant. Thus there are $l(m-1)-2$ choices for $r_1$.

For $r_i$ with $i=2, \dots, n-1$, the situation is similar, though we do not require that $a_{j_i}$ appears inside the remnant. Thus the number of choices for these $r_i$ is at least $l(m-1)$.

For $r_n$ we pick a location of $a_{k}$ in $\phi (a_{j_{n-1}})$, and there are at least $l$ choices for this.

Taking the product, we have 
\[ P_n(f) \ge m \cdot (l(m-1)-2) \cdot (l(m-1))^{n-2} \cdot l, \]
which gives the desired bound.
\end{proof}

Since, as we show in Section \ref{Dsection}, $S_l$ is generic for any $l$ we can improve the result of Corollary \ref{periodiccor}:
\begin{cor}
For any real number $r$, the set of selfmaps with $\# P_n(f) \ge r$ for all $n$ is homotopy-generic.
\end{cor}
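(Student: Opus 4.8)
The plan is to combine the genericity of the sets $S_l$ (promised for Section \ref{Dsection}) with the counting bound just established for $P_n(f)$. The corollary asks to show that, given any real $r$, the set of selfmaps $f$ with $P_n(f) \ge r$ for all $n$ is homotopy-generic; so by definition of homotopy-genericity we need to produce a set $A$ of selfmaps with this property such that $A_\#$ is generic in the space of $m$-tuples.

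First I would dispose of the trivial rank-one case $m=1$ separately, since the preceding theorem's bound $m((m-1)l-2)(m-1)^{n-2}l^{n-1}$ degenerates when $m=1$; here one invokes the classical computation $N(\phi_d^n)=|1-d^n|$ recalled in the Preliminaries, noting that all but finitely many $d$ give $|1-d^n|$ unboundedly large, and a cofinite subset of $\Z$ has density $1$. (Alternatively one simply remarks that the statement is vacuous or handled by Corollary \ref{periodiccor} in that case.) For $m \ge 2$, I would fix $r$ and choose $l$ large enough that the quantity $m((m-1)l-2)(m-1)^{n-2}l^{n-1}$ exceeds $r$ for every $n \ge 1$; this is possible because, for $m \ge 2$, the exponential factor $(m-1)^{n-2}l^{n-1}$ is nondecreasing in $n$ once $l$ is at least, say, $3$, so it suffices to make the $n=1$ value $m((m-1)l-2) \ge r$, which holds for all sufficiently large $l$. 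Then for every $\phi \in S_l$ and every $n$, the preceding theorem gives $P_n(f) \ge r$ whenever $f_\# = \phi$.

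Next I would appeal to the fact (to be proved in Section \ref{Dsection}) that $S_l$ is generic for every $l$. Setting $A = \{ f : f_\# \in S_l \}$, we have $A_\# = S_l$ (up to the identification of endomorphisms with $m$-tuples), which is generic; hence $A$ is homotopy-generic by definition, and by the previous paragraph every $f \in A$ satisfies $P_n(f) \ge r$ for all $n$. Since the set of selfmaps with $P_n(f) \ge r$ for all $n$ contains $A$ and is itself a union of homotopy classes whose induced homomorphisms contain the generic set $S_l$, it too is homotopy-generic. This completes the argument.

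The only genuine obstacle is bookkeeping rather than mathematics: one must confirm that the bound in the preceding theorem is genuinely unbounded and eventually monotone in $l$ across all $n$ simultaneously for $m \ge 2$, so that a single $l$ works for all periods, and one must handle the exceptional rank $m=1$ by hand. Neither step is deep. I expect the write-up to be only a few lines, essentially: fix $r$, pick $l$ with $m((m-1)l-2) \ge r$ and $l \ge 3$, observe monotonicity in $n$, and quote genericity of $S_l$ from Section \ref{Dsection}.
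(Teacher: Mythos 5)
Your argument is correct and is essentially the paper's (unwritten) proof: the corollary is deduced exactly by choosing $l$ large enough that the bound $m((m-1)l-2)(m-1)^{n-2}l^{n-1}$ from the preceding theorem exceeds $r$ for every $n$, and then invoking the genericity of $S_l$ from Section \ref{Dsection}, with the rank-one case treated separately as in the Preliminaries. The only quibble is bookkeeping: at $n=1$ the bound reads $m((m-1)l-2)/(m-1)$ rather than $m((m-1)l-2)$, but this still tends to infinity with $l$, so your choice of $l$ goes through unchanged.
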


\section{Generic behavior of some other dynamical invariants}\label{othersection}

In \cite{jian96}, Jiang defines the \emph{asymptotic Nielsen number} $N^\infty(f)$ as the growth rate of the Nielsen numbers:
\[ N^\infty(f) = \Growth_{n\to \infty} N(f^n) = \max\left\{1, \limsup_{n\to\infty} N(f^n)^{1/n}\right\}. \] 
Jiang shows that for a selfmap on a polyhedron the quantity $N^\infty(f)$ is finite. For selfmaps on the circle ($m=1$) it is easy to see that when $f$ is the degree $d$ map we have $N^\infty(f) = \max\{1,|d|\}$. 

For $m>1$ we may use Theorem \ref{wbound} to estimate $N(f^n)$. When $f_\# \in S_l$ we have $N^\infty(f) \ge \limsup_{n\to\infty} (l^nm^n-2m)^{1/n}$ and thus:
\begin{thm}\label{ninfty}
Whenever $f_\#\in S_l$, we have $N^\infty(f) \ge lm$.
\end{thm}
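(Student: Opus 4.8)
The plan is to derive this as an immediate consequence of Theorem~\ref{wbound} together with the definition of the asymptotic Nielsen number and the elementary bound $N(\phi^n) \ge \#W(\phi^n)$ established in Lemma~\ref{ngew}. First I would recall that, by definition,
\[ N^\infty(f) = \max\left\{1, \limsup_{n\to\infty} N(f^n)^{1/n}\right\}, \]
so it suffices to show that $\limsup_{n\to\infty} N(f^n)^{1/n} \ge lm$. Since $f_\# = \phi \in S_l$, Lemma~\ref{ngew} gives $N(f^n) = N(\phi^n) \ge \#W(\phi^n)$, and Theorem~\ref{wbound} gives $\#W(\phi^n) \ge l^n m^n - 2m$. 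Combining these, $N(f^n) \ge l^n m^n - 2m$ for every $n$.

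The next step is to take $n$-th roots and pass to the limit. We have
\[ N(f^n)^{1/n} \ge \left( l^n m^n - 2m \right)^{1/n} = lm \left( 1 - \frac{2m}{l^n m^n} \right)^{1/n}. \]
If $lm > 1$ (in particular whenever $m > 1$, or when $m=1$ and $l \ge 2$), then $l^n m^n \to \infty$, so the bracketed factor tends to $1$ and hence $\liminf_{n\to\infty} N(f^n)^{1/n} \ge lm$; in particular the $\limsup$ is at least $lm$, giving $N^\infty(f) \ge lm$ as claimed. One should note the trivial edge case $lm \le 1$ (only $l=m=1$): there the inequality $N^\infty(f) \ge lm = 1$ holds automatically since $N^\infty(f) \ge 1$ by definition, so no separate argument is needed.

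I do not anticipate a genuine obstacle here — the theorem is essentially bookkeeping built on Theorem~\ref{wbound}. The only point requiring minor care is the $n$-th root limit: one must make sure the subtracted constant $2m$ is negligible after extracting the exponential factor $l^n m^n$, which is exactly what the factorization above accomplishes, and one should handle the degenerate case $lm = 1$ by appealing to the $\max\{1, \cdot\}$ in the definition of $N^\infty$. Thus the proof is short, and I would present it as: apply Lemma~\ref{ngew} and Theorem~\ref{wbound} to bound $N(f^n)$ from below, take $n$-th roots, and let $n\to\infty$.
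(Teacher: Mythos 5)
Your proposal is correct and follows essentially the same route as the paper, which likewise combines Theorem~\ref{wbound} with the definition of $N^\infty(f)$ to get $N^\infty(f) \ge \limsup_{n\to\infty}(l^nm^n-2m)^{1/n} = lm$. Your extra care with the $n$-th root limit and the degenerate case $lm=1$ is fine but not needed beyond what the paper already does.
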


Jiang also shows that, for polyhedra, the quantity $N^\infty(f)$ is related to the topological entropy $h(f)$ as follows:
\[ h(f) \ge \log N^\infty(f).\] 
Thus we will have $h(f) \ge \log(lm)$ whenever $f_\#\in S_l$.
We will give another proof of this fact based on the \emph{fundamental group entropy} $h_\#(f)$.

We follow the definitions from \cite{kh95}. The fundamental group entropy can be defined for any finitely generated group, but is much simpler for a free group. Let $G = \pi_1(X) = \langle a_1, \dots, a_m\rangle$ and let $\phi:G \to G$ be an endomorphism. Let $L_n(\phi) = \max_{i} |\phi^n(a_i)|$, and we define:
\[ h_\#(f) = \lim_{n\to\infty} \log (L_n(f_\#)^{1/n}). \]
(For non-free groups the lack of a minimal word length makes the definition considerably more complicated.)

Our result is:
\begin{thm}
Let $f_\# \in S_l$. Then we have
\[ h(f) \ge h_\#(f) \ge \log(lm) \]
\end{thm}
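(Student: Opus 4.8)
The plan is to handle the two inequalities separately, as they are essentially independent. The first inequality $h(f) \ge h_\#(f)$ is a standard fact relating topological entropy to fundamental-group entropy for selfmaps of finite CW-complexes; I would cite \cite{kh95} (or \cite{jian96}) for this, since the fundamental group entropy is precisely designed to be a lower bound for $h(f)$: a homotopy from $f$ to a CW-map expands cells at the rate $L_n(f_\#)$, and this growth forces a corresponding exponential growth of $(n,\epsilon)$-separated sets. Essentially no new work is needed here — the content of the theorem is the second inequality.

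For the second inequality $h_\#(f) \ge \log(lm)$, the goal is to show $\lim_{n\to\infty} L_n(\phi)^{1/n} \ge lm$ where $L_n(\phi) = \max_i |\phi^n(a_i)|$ and $\phi = f_\# \in S_l$. The key observation is that $|\phi^n(a_j)| \ge |\Rem_{\phi^n} a_j|$ by the definition of the remnant as a subword, and moreover $|\Rem_{\phi^n} a_j| \ge \Phi_{a_i}(\Rem_{\phi^n} a_j)$ for any fixed $i$, since each occurrence of $a_i^{\pm 1}$ contributes at least one letter to the reduced word. Applying Lemma \ref{philemma} with any choice of $i$ and $j$ then gives
\[ L_n(\phi) \ge |\phi^n(a_j)| \ge |\Rem_{\phi^n} a_j| \ge \Phi_{a_i}(\Rem_{\phi^n} a_j) \ge l^n m^{n-1}. \]
Taking $n$-th roots, $L_n(\phi)^{1/n} \ge l m^{(n-1)/n} = lm \cdot m^{-1/n} \to lm$ as $n\to\infty$, so $\lim_{n\to\infty} L_n(\phi)^{1/n} \ge lm$ and hence $h_\#(f) = \lim_{n\to\infty}\log L_n(\phi)^{1/n} \ge \log(lm)$.

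The argument is short because all the real work is already packaged in Lemma \ref{philemma}; the only subtlety worth a sentence is that the limit defining $h_\#$ exists (one should note $L_n$ is submultiplicative up to the constant bounding $\max_i|\phi(a_i)|$, so Fekete's lemma applies — or simply cite the definition as given in \cite{kh95}). I expect no genuine obstacle here: the main conceptual point, that the exponential growth of remnants transfers to exponential growth of word lengths, is immediate from the fact that a remnant is a noncanceling subword. One should, however, double-check that the inequality chain above does not secretly require $\phi^n$ to have remnant — it does, but this is guaranteed by Lemma 3.6 of \cite{wagn99} (already invoked in the proof of Lemma \ref{iteratesrem}), so $\Rem_{\phi^n} a_j$ is well-defined for all $n$.
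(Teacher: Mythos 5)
Your proposal is correct and follows essentially the same route as the paper: cite \cite{kh95} for $h(f)\ge h_\#(f)$, then chain $L_n(\phi)\ge|\phi^n(a_i)|\ge\Phi_{a_i}(\Rem_{\phi^n}a_i)\ge l^nm^{n-1}$ via Lemma \ref{philemma} and take $n$-th roots. Your added remarks (that $\Rem_{\phi^n}a_j$ is well-defined by Wagner's Lemma 3.6, and the aside on the existence of the limit) are sound but not needed beyond what the paper does.
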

\begin{proof}
The inequality $h(f) \ge h_\#(f)$ will always hold in our setting (a proof in \cite{kh95} is stated for compact connected manifolds, but requires only that all sufficiently small loops are contractible). Thus we need only show the second inequality. 

Choose any $i$, and we have 
\begin{align*}
h_\#(f) &\ge \lim_{n\to\infty} \log (|\phi^n(a_i)|^{1/n}) 
\ge \lim_{n\to\infty} \log (\Phi_{a_i}(\Rem_{\phi^n}a_i)^{1/n}) \\
&\ge \lim_{n\to\infty} \log (l^{n} m^{n-1})^{1/n} 
= \lim_{n\to\infty} \log(lm^{1-1/n}) = \log (lm)
\end{align*} 
where the last inequality is by Lemma \ref{philemma}.
\end{proof}

Since $S_l$ is generic for every $l$, the results of this section imply:
\begin{cor}
For any real number $r$, the set of selfmaps $f$ with
\[ N^\infty(f) \ge r, \quad h(f)\ge h_\#(f)\ge r, \]
is homotopy-generic.
\end{cor}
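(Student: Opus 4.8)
The plan is to derive the corollary directly from the three preceding theorems of this section together with the genericity of $S_l$, which is established in Section \ref{Dsection}. The key observation is that all three displayed inequalities in the corollary are simultaneously guaranteed for any single map $f$ with $f_\# \in S_l$, and that the required threshold $r$ can be met by choosing $l$ large enough.

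First I would fix a real number $r$ and choose a positive integer $l$ with $lm \ge r$ (recall $m \ge 1$ is the rank of $G$, so this is always possible). Then for any selfmap $f$ with $f_\# = \phi \in S_l$, Theorem \ref{ninfty} gives $N^\infty(f) \ge lm \ge r$, and the immediately preceding theorem (relating $h$, $h_\#$, and $\log(lm)$) gives $h(f) \ge h_\#(f) \ge \log(lm)$. The only subtlety is that the corollary asks for $h(f) \ge h_\#(f) \ge r$ rather than $\ge \log(lm)$; since $\log$ is unbounded, I would instead choose $l$ large enough that both $lm \ge r$ and $\log(lm) \ge r$ hold (e.g. $l \ge \lceil e^r \rceil$ suffices for the latter), which handles both conditions at once. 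Thus every $f$ with $f_\# \in S_l$ lies in the set described in the corollary, so this set contains $\{ f \mid f_\# \in S_l \}$.

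Next I would invoke the genericity of $S_l$ from Section \ref{Dsection}: since $S_l$ is generic, the set of selfmaps $f$ with $f_\# \in S_l$ is homotopy-generic by definition. A superset of a homotopy-generic set is homotopy-generic (because $A_\#$ generic and $A_\# \subseteq B_\#$ forces $B_\#$ generic, as asymptotic density is monotone and bounded above by $1$), so the set in the corollary is homotopy-generic, as claimed.

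I do not expect any real obstacle here — the corollary is a packaging statement. The one point requiring a moment's care is the $\log(lm) \ge r$ versus $lm \ge r$ discrepancy noted above, but it dissolves as soon as one picks $l$ sufficiently large; everything else is a direct citation of Theorems \ref{ninfty} and the fundamental-group-entropy theorem together with the genericity of $S_l$.
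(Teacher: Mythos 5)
Your proposal is correct and matches the paper's (unstated) argument exactly: the paper presents this corollary as an immediate consequence of Theorem \ref{ninfty}, the fundamental-group-entropy theorem, and the genericity of $S_l$, which is precisely the packaging you carry out. Your extra care in choosing $l$ large enough that $\log(lm)\ge r$ as well as $lm\ge r$ is the right (and only) point needing attention, and you handle it correctly.
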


Jiang's paper \cite{jian96} also gives an upper bound on $N^\infty(f)$. Jiang's Propositions 2.1 and 2.6 show that $N^\infty(f)$ is bounded above by the greater of 1 or the spectral radius of the matrix $||\tilde F_1||$, where $\tilde F_1$ is the ``Fox Jacobian'' matrix (see \cite{fh83}) and the vertical bars denote taking the magnitude of group ring elements in each entry of the matrix. Directly from the definition of the Fox derivative we see that the $i,j$ entry of $||\tilde F_1||$ will equal $\Phi_{a_i} \phi(a_j)$.
We illustrate this with an example:
\begin{exl}
Let $m=2$ and let $f$ be a map so that $\phi = f_\#: \langle a, b \rangle \to \langle a, b \rangle$ is given by
\[ \phi(a) = \underline{ab^2a}b^{-1}, \quad \phi(b) = b\underline{ab^{-1}ab}. \]
The remnant above is underlined, and we can see that $\phi \in S_2$. Thus by Theorem \ref{ninfty} we have $N^\infty(f) \ge 2\cdot 2 = 4$.

For the upper bound, we compute $||\tilde F_1||$ and obtain:
\[ ||\tilde F_1|| = \begin{bmatrix} \Phi_a \phi(a) & \Phi_a \phi(b) \\ \Phi_b \phi(a) & \Phi_b \phi(b) \end{bmatrix} 
= \begin{bmatrix} 2 & 2 \\ 3 & 3 \end{bmatrix},
\]
which has spectral radius 5. Thus we have
\[ 4 \le N^\infty(f) \le 5. \]

Using a computer implementation of Wagner's algorithm (an implementation in the GAP language is available at the second author's website), we can compute values of $N(f^n)$. Though it is easily done by hand for ``small'' maps, Wagner's algorithm has exponential complexity, and so we are not able to compute many such values. In this example we can compute the following values (the third column is approximate):
\begin{center} \begin{tabular}{|c|c|c|}
\hline
$n$ & $N(f^n)$ & $N(f^n)^{1/n}$ \\
\hline
1 & 3 & 3 \\
2 & 19 & 4.358 \\
3 & 93 & 4.530 \\
4 & 431 & 4.556 \\
5 & 1973 & 4.560 \\
\hline
\end{tabular}
\end{center}
\end{exl}

\section{The density of $S_l$}\label{Dsection}
In this section we show that $D(S_l) =1$ for $l>0$. 

We will identify the set of endomorphisms $G \to G$ with the cartesian product $G^m$. Let $G_p$ be the set of elements of $G$ with length at most $p$. Then the density of $S_l$ will be 
\[ D(S_l) = \lim_{p \to \infty} \frac{|S_l \cap G_p^m|}{|G_p^m|}. \]

For $p>0$, the number of words of length exactly $p$ is 
\begin{equation}\label{wordsexactly}
\#\{w\in G : |w|=p\} = 2m(2m-1)^{p-1},
\end{equation}
since the first letter can be any letter of $G$, while each subsequent
letter can be anything but the inverse of the previous. Summing gives
the formula 
\begin{equation}\label{wordsatmost}
|G_p| = 1 + \sum_{j=1}^p 2m(2m-1)^{j-1} = \frac{m(2m-1)^p - 1}{m-1}
\end{equation}

The following basic fact about asymptotic density will be useful.
\begin{lem}\label{genericd}
For a free group $F$, let $A, B \subset F$ be subsets with $D(A) = 1$. Then if $D(A\cap B)$ exists, we have $D(A \cap B) = D(B)$.
\end{lem}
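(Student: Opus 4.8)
The plan is to exploit the elementary set-theoretic identity relating $B$ to $A \cap B$ when $A$ is generic. First I would write $B$ as the disjoint union $B = (A \cap B) \sqcup (B \setminus A)$, so that for every $p$ we have the exact counting identity
\[ |B \cap F_p| = |(A\cap B) \cap F_p| + |(B \setminus A) \cap F_p|. \]
Dividing through by $|F_p|$ and taking the limit as $p \to \infty$, the left side tends to $D(B)$ (which we are implicitly assuming exists, or we argue it exists from the other two) and the first term on the right tends to $D(A\cap B)$ by hypothesis. So the whole result reduces to showing that the ``error term'' $|(B\setminus A)\cap F_p| / |F_p|$ tends to $0$.

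The key estimate is that $B \setminus A \subset F \setminus A$, hence $|(B\setminus A) \cap F_p| \le |(F\setminus A)\cap F_p|$ for all $p$. Since $D(A) = 1$ by hypothesis, the quantity $|(F\setminus A)\cap F_p|/|F_p| = 1 - |A \cap F_p|/|F_p|$ tends to $1 - 1 = 0$. By the squeeze/comparison principle the error term vanishes in the limit, which gives $D(B) = D(A\cap B)$. One small point of care: the statement hypothesizes that $D(A\cap B)$ exists but does not separately hypothesize that $D(B)$ exists; the argument above actually shows $\lim_{p\to\infty} |B\cap F_p|/|F_p|$ exists and equals $D(A\cap B)$, so $D(B)$ is defined a posteriori and the asserted equality holds. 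I would phrase the write-up so that this is transparent rather than circular.

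I do not anticipate a genuine obstacle here — this is a routine ``generic complement has density zero'' argument. The only thing to be mildly careful about is bookkeeping with which densities are assumed to exist versus derived, and making sure the inclusion $B \setminus A \subseteq F \setminus A$ is invoked cleanly so that the comparison of finite cardinalities at each level $p$ is valid before any limit is taken. Everything else is the standard manipulation of the defining limits of asymptotic density.
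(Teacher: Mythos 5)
Your proof is correct and follows essentially the same route as the paper: both reduce the claim to the inclusion $B \setminus A \subseteq F \setminus A$ and the fact that $D(F \setminus A) = 1 - D(A) = 0$. Your additional care about deriving (rather than assuming) the existence of $D(B)$ is a small refinement of the paper's computation, which starts from the difference $D(B) - D(A\cap B)$, but the underlying argument is identical.
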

\begin{proof}
We compute:
\begin{align*}
D(B) - D(A \cap B) &= \lim_{p \to \infty} \frac{|B \cap F_p|}{|F_p|} - \frac{|A \cap B \cap F_p|}{|F_p|} = \lim_{p \to \infty} \frac{|B \cap F_p| - |A \cap B \cap F_p|}{|F_p|} \\
&\le \lim_{p \to \infty} \frac{|(B-A)\cap F_p|}{|F_p|} \le \lim_{p \to \infty} \frac{|(F-A) \cap F_p|}{|F_p|} \\ 
&= D(F-A) = 1-D(A) = 0. 
\end{align*}
\end{proof}

We are now ready for our final result:
\begin{thm}\label{densitythm}
For any $l>0$, we have $D(S_l) = 1$.
\end{thm}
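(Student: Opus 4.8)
The plan is to prove that the complement $G^{m}\setminus S_{l}$ has density $0$.  First I would decompose it as
\[ G^{m}\setminus S_{l}\;=\;\{\phi:\phi\text{ has no remnant}\}\;\cup\;\bigcup_{1\le i,j\le m}B_{i,j},\qquad B_{i,j}:=\{\phi\text{ with remnant}:\Phi_{a_{i}}(\Rem_{\phi}a_{j})<l\}, \]
and note that, since $|A\cup A'|\le|A|+|A'|$, a finite union of density-zero subsets of $G^{m}$ again has density $0$, while the set of $\phi$ without remnant has density $0$ by the theorem of R.\,F.\ Brown quoted above.  So it reduces to proving $D(B_{i,j})=0$ for each fixed pair $(i,j)$.

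Next, for fixed $i,j$ I would control the remnant as follows.  For any $\phi$, let $P=P(\phi)$ be the largest cancellation length occurring in a product $\phi(a_{j'})^{\epsilon}\phi(a_{j})$, with the maximum over the (at most $2m$) pairs $(j',\epsilon)\neq(j,-1)$ permitted by the remnant condition, and let $Q=Q(\phi)$ be defined symmetrically from products $\phi(a_{j})\phi(a_{k})^{\delta}$ with $(k,\delta)\neq(j,-1)$.  When $\phi$ has remnant, $\Rem_{\phi}a_{j}$ is exactly $\phi(a_{j})$ with its first $P$ letters and last $Q$ letters deleted, and removing $P+Q$ letters removes at most $P+Q$ occurrences of $a_{i}^{\pm1}$, so
\[ \Phi_{a_{i}}(\Rem_{\phi}a_{j})\;\ge\;\Phi_{a_{i}}(\phi(a_{j}))-P(\phi)-Q(\phi). \]
Hence $B_{i,j}\subseteq\widetilde{B}_{i,j}:=\bigl\{\phi:\Phi_{a_{i}}(\phi(a_{j}))<P(\phi)+Q(\phi)+l\bigr\}$, and it is enough to show $D(\widetilde{B}_{i,j})=0$.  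I would fix any constant $C>0$ and observe that if $\phi$ satisfies both $P(\phi)+Q(\phi)<C\log p$ and $\Phi_{a_{i}}(\phi(a_{j}))\ge C\log p+l$, then $\phi\notin\widetilde{B}_{i,j}$; so for every $p$ the proportion of $\phi\in G_{p}^{m}$ lying in $\widetilde{B}_{i,j}$ is at most the proportion with $P(\phi)+Q(\phi)\ge C\log p$ plus the proportion with $\Phi_{a_{i}}(\phi(a_{j}))<C\log p+l$, and it remains only to show each of these tends to $0$.

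The proportion with large $P+Q$ is the easy one: if two of the words $\phi(a_{j'})^{\pm1},\phi(a_{j})$ cancel in $t$ or more letters, the last $t$ letters of one must be the inverse reverse of the first $t$ letters of the other, so by \eqref{wordsexactly}--\eqref{wordsatmost} this happens for only an $O\bigl((2m-1)^{-t}\bigr)$ fraction of $m$-tuples (the self-overlap case $(j',\epsilon)=(j,+1)$ costing only a constant factor in the exponent).  Taking $t=(C/2)\log p$ and a union bound over the at most $4m$ cancellations comprising $P$ and $Q$ bounds this proportion by $O\bigl(m\,(2m-1)^{-(C/4)\log p}\bigr)\to0$.

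The hard part will be the other term: I must show that reduced words poor in the letter pair $\{a_{i},a_{i}^{-1}\}$ are exponentially rare, and I expect this direct count to be the main obstacle.  Put $s:=C\log p+l$; a reduced word $w$ of length $q$ containing fewer than $s$ copies of $a_{i}^{\pm1}$ is determined by the positions and signs of its $a_{i}^{\pm1}$ letters together with the ordered list of its other letters (each one of $2m-2$ possibilities), so there are at most $s\binom{q}{s}2^{s}(2m-2)^{q}=p^{O(\log p)}(2m-2)^{q}$ of them.  Summing over $q\le p$ and dividing by $|G_{p}|$ from \eqref{wordsatmost} bounds this proportion, up to a constant, by $p^{O(\log p)}\bigl(\tfrac{2m-2}{2m-1}\bigr)^{p}$, which tends to $0$ since $m>1$ makes the base $\tfrac{2m-2}{2m-1}$ strictly less than $1$.  (When $m=1$ every letter is $a_{1}^{\pm1}$ so the estimate degenerates in the right direction; in any case the rank-one case is treated directly in the preliminary discussion.)  Putting the pieces together gives $D(B_{i,j})=0$ for all $i,j$, and hence $D(S_{l})=1$.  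This counting is in the spirit of, though finer than, the arguments of \cite{ao96} and \cite{stae11} that establish the genericity of $R_{k}$.
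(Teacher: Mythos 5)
Your argument is correct in outline, but it follows a genuinely different route from the paper's. The paper first conditions on the generic set $R_k$ of maps whose remnants all have length at least $k$, then partitions $R_k\cap G_p^m$ into classes on which the remnant lengths and endpoint letters are fixed; within each class the interior of each remnant is a freely chosen reduced word of fixed length, so the fraction of maps whose remnant interior omits $a_i^{\pm1}$ is bounded by an explicit ratio $\left(\tfrac{2m-3}{2m-1}\right)^{k-2}$, and letting $k\to\infty$ kills the complement. You instead work from the outside in: you bound the letters lost in passing from $\phi(a_j)$ to $\Rem_\phi a_j$ by the total cancellation $P+Q$, show that cancellations of length $\ge C\log p$ occur for only an $O\bigl((2m-1)^{-(C/2)\log p}\bigr)$ fraction of tuples, and show that reduced words with fewer than $O(\log p)$ occurrences of $a_i^{\pm1}$ form a fraction $p^{O(\log p)}\bigl(\tfrac{2m-2}{2m-1}\bigr)^p\to 0$. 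Both arguments are sound; yours relies only on Brown's genericity of remnant (not on genericity of $R_k$), and it actually proves something quantitatively stronger, namely that $\Phi_{a_i}(\Rem_\phi a_j)$ is generically linear in $p$ rather than merely $\ge l$, whereas the paper's class-by-class ratio is cleaner and avoids the $\log p$ bookkeeping. Two small points to tidy: your shorthand bound $s\binom{q}{s}2^s(2m-2)^q$ should really be the sum $\sum_{s'<s}\binom{q}{s'}2^{s'}(2m-2)^{q-s'}$ (as written it vanishes for $q<s$, where instead one just absorbs all $2m(2m-1)^{q-1}=p^{O(1)}$ words of such short length into the error term), and the union-of-density-zero-sets step should be phrased with $\limsup$ since a priori the individual densities need not exist as limits.
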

\begin{proof}
We will prove the theorem in the special case where $l=1$. A straightforward but more complicated counting argument is required in the general case, which we will indicate below.
In the following, let $S=S_1$ and let $\bar S$ be the complement of $S$. 

For any $k>2$, we will define an equivalence relation among the homomorphisms of $R_k \cap G_p^m$ (for $p\ge k$): we say $\phi \sim \rho$ when for each $i$, the remnant words $\Rem_\phi a_i$ and $\Rem_\rho a_i$ have the same length and the same initial and terminal letters as each other. (So the initial letter of $\Rem_\phi a_i$ equals the initial letter of $\Rem_\rho a_i$ and the terminal letter of $\Rem_\phi a_i$ equals the terminal letter of $\Rem_\rho a_i$.)

The relation $\sim$ is an equivalence relation, let $[\phi] \subset R_k \cap G_p^m$ denote the equivalence class of $\phi$. We will estimate the quantities $\#[\phi]$ and $\#([\phi] \cap \bar S)$. Let $r_i = |\Rem_\phi a_i|$, and note that these numbers are the same among all maps in $[\phi]$. 

To construct a homomorphism $\rho \in [\phi]$, the initial and terminal letters of the remnant subword will be fixed. Thus we have choices only in filling in the subwords before and after the remnant, and the subword inside the remnant. Let $A_i$ and $B_i$ be the number of choices for the subwords before and after the remnant in $\rho(a_i)$, and let $R_i$ be the number of choices for the subword inside the remnant. The word inside the remnant can be any word of length $r_i - 2$ which does not cancel with the two chosen end letters. To choose this word we select the first letter to be different from the inverse of the previous, and continue choosing letters until the last one, which could possibly have the extra restriction of not being the inverse of the terminal letter of the remnant. Thus we have:
\[ (2m-1)^{r_i-3}(2m-2) \le R_i \le (2m-1)^{r_i-2}, \]
and so in particular we have
\begin{equation}\label{phisize}
\# [\phi] \ge \prod_{i=1}^m A_i B_i (2m-1)^{r_i-3}(2m-2). 
\end{equation}

Constructing a homomorphism $\psi \in [\phi] \cap \bar S$ is similar. Since $\psi \in \bar S$, there are some $j, k$ such that there is no $a_j^{\pm 1}$ inside some remnant subword $\Rem_\psi a_k$. By symmetry we will assume that $j=k=1$, and multiply our count by $m^2$ to account for other choices of $j$ and $k$. (This will multiply count the homomorphisms in which several remnant subwords lack certain letters.) 

The counts for $A_i$ and $B_i$ are the same for each $i$ as they were in \eqref{phisize}, as are the number of choices for the inside of the remnant in $\Rem_\psi a_i$ for $i\neq 1$. For $i=1$, however, $\Rem_\psi a_1$ cannot use the letters $a_1^{\pm 1}$ at all.\footnote{To prove the theorem for $S_l$ in the case with $l>1$, this count must be modified to allow no more than $l$ uses of $a_i^{\pm 1}$.} Thus the number of these choices will be less than or equal to $(2m-3)^{r_1 -2}$, and we have
\begin{equation}\label{phicapsize}
\# ([\phi] \cap \bar S) \le m^2 A_1 B_1 (2m-3)^{r_1-2}  \prod_{i=2}^m A_i B_i (2m-1)^{r_i-2} 
\end{equation}

Combining \eqref{phisize} and \eqref{phicapsize}, and recalling that $r_i \ge k$ for $\phi\in R_k$ gives
\begin{align*} 
\frac{\#([\phi] \cap \bar S)}{\#[\phi]} &\le \frac{m^2 (2m-3)^{r_1-2}}{(2m-1)^{r_1-3}(2m-2)}  = \frac{m^2}{(2m-2)(2m-1)}\left(\frac{2m-3}{2m-1}\right)^{r_1-2} \le \left( \frac{2m-3}{2m-1} \right)^{k-2}
\end{align*}
since $\frac{m^2}{(2m-2)(2m-1)} \le 1$ for $m\ge 2$.

Now we can measure the density $D(\bar S)$. We have:
\begin{align*}
D(\bar S) = D(\bar S \cap R_k) &= 
\lim_{p \to \infty} \frac{|\bar S \cap R_k \cap G_p^m|}{|G_p^m|} \\
&= \lim_{p \to \infty} \frac{|\bar S \cap R_k \cap G_p^m|}{|R_k\cap G_p^m|} \frac{|R_k \cap G_p^m|}{|G_p^m|} = \lim_{p \to \infty} \frac{|\bar S \cap R_k \cap G_p^m|}{|R_k\cap G_p^m|}
\end{align*}
since $R_k$ is generic. Let $\mathcal C$ be the set of equivalence classes in $R_k \cap G_p^m$ given by the relation $\sim$. Then splitting the above into classes gives:
\begin{align*}
D(\bar S) &= \lim_{p \to \infty} \frac{1}{|R_k\cap G_p^m|} \sum_{[\phi] \in \mathcal C} \#([\phi] \cap \bar S)
\le \lim_{p \to \infty} \frac{1}{|R_k\cap G_p^m|} \sum_{[\phi] \in \mathcal C} \#[\phi] \cdot \left( \frac{2m-3}{2m-1} \right)^{k-2} \\
&= \left( \frac{2m-3}{2m-1} \right)^{k-2} \lim_{p \to \infty} \frac{1}{|R_k\cap G_p^m|} \sum_{[\phi] \in \mathcal C} \#[\phi] \\
&= \left( \frac{2m-3}{2m-1} \right)^{k-2} \lim_{p \to\infty} \frac{1}{|R_k\cap G_p^m|} |R_k\cap G_p^m| 
= \left( \frac{2m-3}{2m-1} \right)^{k-2}
\end{align*}

Since the above is true for any $k$, and the final quantity can be made as small as desired with sufficiently large $k$, we have $D(\bar S) = 0$ and thus $D(S) = 1 - D(\bar S) = 1$.
\end{proof}

\end{document}